\newcommand*{\ceilfrac}[2]{\mathopen{}\left\lceil\frac{#1}{#2}\right\rceil\mathclose{}}
\newcommand*{\floorfrac}[2]{\mathopen{}\left\lfloor\frac{#1}{#2}\right\rfloor\mathclose{}}
\newcommand*{\abs}[1]{\lvert #1\rvert}
\newcommand{\imax}{i_{\text{max}}}
\newenvironment{customthm}[1]
  {\innercustomthm}
  {\endinnercustomthm}
\newtheorem{defi}{Definition}
\newtheorem{conj}[defi]{Conjecture}
\newtheorem{cor}[defi]{Corollary}
\newtheorem{thr}[defi]{Theorem}
\newtheorem{lem}[defi]{Lemma}
\newtheorem{prop}[defi]{Proposition}
\newtheorem{question}[defi]{Question}
\newtheorem{remark}[defi]{Remark}
\newcommand*{\myproofname}{Proof}
\def\G{\mathcal{G}}
\def\M{\mathcal{M}}
\def\F{\mathcal{F}}
\def\E{\mathcal{E}}
\title{The minimum number of maximal independent sets in twin-free graphs}
\author{Stijn Cambie\thanks{Department of Computer Science, KU Leuven Campus Kulak-Kortrijk, 8500 Kortrijk, Belgium. Supported by the Institute for Basic Science (IBS-R029-C4), Internal Funds of KU Leuven (PDM fellowship PDMT1/22/005) and a postdoctoral fellowship by the Research Foundation Flanders (FWO) with grant number 1225224N.
E-mail: {\tt stijn.cambie@hotmail.com}} \and Stephan Wagner\thanks{Institute of Discrete Mathematics, TU Graz, 8010 Graz, Austria and Department of Mathematics, Uppsala University, 751 06 Uppsala, Sweden. Supported by the Knut and Alice Wallenberg Foundation (KAW 2017.0112) and the Swedish research council (VR), grant  2022-04030. E-mail: {\tt stephan.wagner@tugraz.at}} 
}
\date{}
\begin{document}

\maketitle

\begin{abstract}
    The problem of determining the maximum number of maximal independent sets in certain graph classes dates back to a paper of Miller and Muller and a question of Erd\H{o}s and Moser from the 1960s. The minimum was always considered to be less interesting due to simple examples such as stars. In this paper we show that the problem becomes interesting when restricted to twin-free graphs, where no two vertices have the same open neighbourhood. We consider the question for arbitrary graphs, bipartite graphs and trees. The minimum number of maximal independent sets turns out to be logarithmic in the number of vertices for arbitrary graphs, linear for bipartite graphs and exponential for trees. In the latter case, the minimum and the extremal graphs have been determined earlier by Taletski\u{\i} and Malyshev, but we present a shorter proof.
\end{abstract}

\section{Introduction}

\subsection{History on the maximum number of maximal independent sets}

Let $\imax(G)$ denote the number of maximal independent sets of a graph $G$, i.e., the number of independent sets that are not contained in any larger independent set. Answering a question on the number of (maximal) cliques posed by Erd\H{o}s and Moser, Moon and Moser~\cite{MM65}, independently from Miller and Muller~\cite{MillerMuller60} (see also~\cite{W11} for a short alternative proof) proved (by considering the complement) that for a graph $G$ of order $n$, where $n \ge 2$, $$\imax(G) \le 
\begin{cases}
 3^{\frac{n}{3}} & \text{ if } n\equiv 0 \pmod 3,\\
4\cdot 3^{\frac{n-4}{3}} & \text{ if } n\equiv 1 \pmod 3,\\ 
2\cdot 3^{\frac{n-2}{3}} & \text{ if } n\equiv 2 \pmod 3.\\ 
\end{cases}$$

The extremal graphs are disjoint unions of (at most two) $K_2$s and many $K_3$s.
In the original setting (for the number of maximal cliques), the extremal graph is a balanced $\ceilfrac{n}{3}-$partite graph.
Wilf~\cite{Wilf86} proved that among all trees of order $n$ the spiders (presented in Figure~\ref{fig:spiders}) maximize the number of maximal independent sets. For a tree $T$ with $n$ vertices,

$$\imax(T) \le 
\begin{cases}
 2^{\frac{n}{2}-1}+1 & \text{ if } n\equiv 0 \pmod 2,\\
2^{\frac{n-1}{2}} & \text{ if } n\equiv 1 \pmod 2.\\ 
\end{cases}$$

\begin{figure}[h]

\begin{minipage}[b]{.45\linewidth}
\begin{center}
    \begin{tikzpicture}
    {
	\foreach \x in {0,1,2,3,4}{\draw[thick] (2,2) -- (\x,1);}				
	\foreach \x in {0,1,2,3,4}{\draw[thick] (\x,1) -- (\x,0);}
	
    \foreach \x in {0,1,2,3,4}{\draw[fill] (\x,1) circle (0.1);}
	\foreach \x in {0,1,2,3,4}{\draw[fill] (\x,0) circle (0.1);}
    \draw[fill] (2,2) circle (0.1);
			
	}
	\end{tikzpicture}\\
\subcaption{An odd spider}
\label{fig:oddspider}
\end{center}
\end{minipage}\quad\begin{minipage}[b]{.45\linewidth}

\begin{center}
    \begin{tikzpicture}
    {
	\foreach \x in {0,1,2,3,4}{\draw[thick] (2,2) -- (\x,1);}				
	\foreach \x in {0,1,2,3,4}{\draw[thick] (\x,1) -- (\x,0);}
	\draw[thick] (2,2) -- (4,2);
    \foreach \x in {0,1,2,3,4}{\draw[fill] (\x,1) circle (0.1);}
	\foreach \x in {0,1,2,3,4}{\draw[fill] (\x,0) circle (0.1);}
    \draw[fill] (2,2) circle (0.1);
	\draw[fill] (4,2) circle (0.1);		
	}
	\end{tikzpicture}\\
\subcaption{An even spider}
\label{fig:evenspider}
\end{center}
\end{minipage}
\caption{Two spiders.}\label{fig:spiders}
\end{figure}
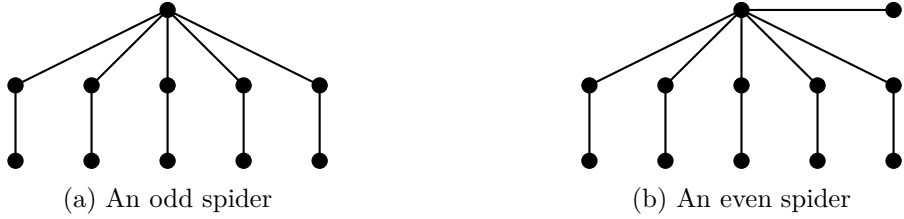

By induction, one can verify with Wilf's idea that the extremal tree is unique when $n$ is odd, but this is not the case when $n$ is even. Sagan~\cite{Sagan88} provided an alternative proof for Wilf's result and also characterized the extremal trees of even order as batons of length $1$ or $3$.
Here batons are subdivisions of trees of diameter at most $3$ in which the pendent edges are subdivided once and the central edge (or one edge of a star) is not subdivided or subdivided twice. Examples of such batons are presented in Figure~\ref{fig:batons}.
From this one can also conclude that there are precisely $\frac n2-1$ extremal trees if $n\ge 4$ is even.

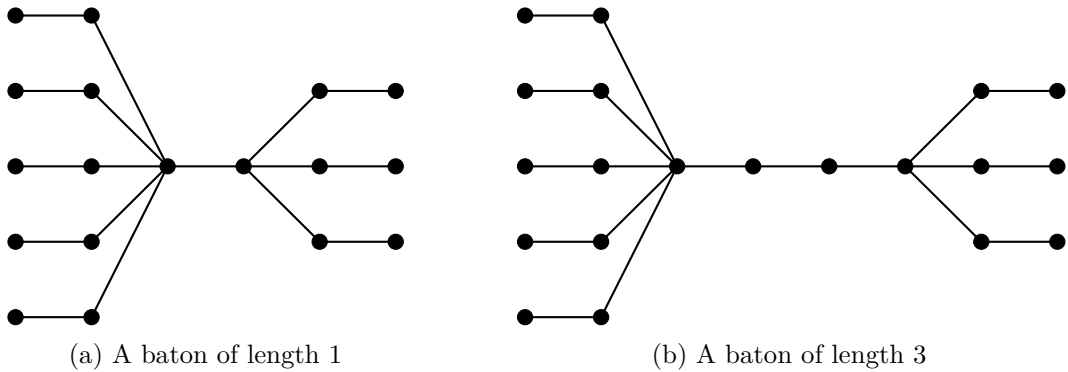
\begin{figure}[h]
\begin{minipage}[b]{.45\linewidth}
\begin{center}
    \begin{tikzpicture}
    {

	\foreach \x in {0,1,2,3,4}{\draw[thick] (2,2) -- (1,\x);}				
	\foreach \x in {0,1,2,3,4}{\draw[thick] (1,\x) -- (0,\x);}
	
    \foreach \x in {0,1,2,3,4}{\draw[fill] (1,\x) circle (0.1);}
	\foreach \x in {0,1,2,3,4}{\draw[fill] (0,\x) circle (0.1);}
    \draw[fill] (2,2) circle (0.1);
    \draw[fill] (3,2) circle (0.1);
    \draw[thick] (3,2) -- (2,2);
    
    \foreach \x in {1,2,3}{\draw[thick] (3,2) -- (4,\x);
    \draw[thick] (4,\x) -- (5,\x);
    \draw[fill] (4,\x) circle (0.1);
    \draw[fill] (5,\x) circle (0.1);
    }	
    }
	\end{tikzpicture}\\
\subcaption{A baton of length $1$}
\label{fig:baton1}
\end{center}
\end{minipage}
\quad\begin{minipage}[b]{.45\linewidth}

\begin{center}
\begin{tikzpicture}
    {
	\foreach \x in {0,1,2,3,4}{\draw[thick] (2,2) -- (1,\x);}				
	\foreach \x in {0,1,2,3,4}{\draw[thick] (1,\x) -- (0,\x);}
	
    \foreach \x in {0,1,2,3,4}{\draw[fill] (1,\x) circle (0.1);}
	\foreach \x in {0,1,2,3,4}{\draw[fill] (0,\x) circle (0.1);}
    \draw[fill] (2,2) circle (0.1);
    \foreach \y in {3,4,5}{
    \draw[fill] (\y,2) circle (0.1);
    \draw[thick] (\y,2) -- (\y-1,2);
    }
    
    \foreach \x in {1,2,3}{\draw[thick] (5,2) -- (6,\x);
    \draw[thick] (6,\x) -- (7,\x);
    \draw[fill] (6,\x) circle (0.1);
    \draw[fill] (7,\x) circle (0.1);
    }
			
	}
	\end{tikzpicture}\\
\subcaption{A baton of length $3$}
\label{fig:baton3}
\end{center}
\end{minipage}
\caption{Batons of length $1$ and $3$.}\label{fig:batons}
\end{figure}

Wilf also asked about the maximum of $\imax(G)$ when considering arbitrary connected graphs of given order. This question was answered independently by F\"{u}redi~\cite{Fur87} (for large $n$) and Griggs et al.~\cite{GGG88}.
They proved that for a connected graph $G$,
$$\imax(G) \le 
\begin{cases}
 2\cdot 3^{\frac{n}{3}-1}+ 2^{\frac{n}{3}-1}& \text{ if } n\equiv 0 \pmod 3,\\
3^{\frac{n-1}{3}}+ 2^{\frac{n-4}{3}} & \text{ if } n\equiv 1 \pmod 3,\\ 
4\cdot 3^{\frac{n-5}{3}}+3\cdot 2^{ \frac{n-8}{3}} & \text{ if } n\equiv 2 \pmod 3.\\ 
\end{cases}$$
For $n \ge 6$, the extremal graph is unique: if $n \equiv k \pmod 3$, where $k \in \{0,1,2\}$, the graph is obtained from a union of $k$ complete graphs $K_4$ and $\frac{n-4k}{3} = \floorfrac{n}{3}-k$ complete graphs $K_3$ by choosing one vertex from each of these complete graphs and connecting them to form a star of order $\floorfrac{n}{3}$. The centre of this star has to belong to a $K_4$, if there is (at least) one.
This is presented in Figure~\ref{fig:max_imax_connG} in the case $n \equiv 2 \pmod 3$.

\begin{figure}[h]
    \centering
    
    \begin{tikzpicture}
    {
    
    \draw[thick] (-5,4)--(1,5) -- (1,6);
    \draw[thick] (-2.5,4)--(1,5) -- (6,4);
    \draw[thick] (0,4)--(1,5);
    \draw[fill] (1,5) circle (0.1);
    
    \draw[thick] (1,5)--(0,7)--(2,7)-- cycle;
    \draw[fill] (0,7) circle (0.1);
    \draw[fill] (2,7) circle (0.1);

    \draw[thick, color=blue] (1,5) -- (1,6.2)--(2,7);
    \draw[thick, color=blue] (1,5) -- (1,6.2)--(0,7);
    \draw[fill=blue] (1,6.2) circle (0.1);
    
    \draw[thick, color=blue] (-5,4) -- (-5,2.8)--(-6,2);
    \draw[thick, color=blue] (-5,4) -- (-5,2.8)--(-4,2);
    \draw[fill=blue] (-5,2.8) circle (0.1);

    \foreach \x in {-5,-2.5,0,6}
    {
    \draw[fill] (\x,4) circle (0.1);
    \draw[fill] (\x-1,2) circle (0.1);
    \draw[fill] (\x+1,2) circle (0.1);
 
    \draw[thick] (\x,4) -- (1,5);

    \draw[thick] (\x+1,2)-- (\x,4) -- (\x-1,2)--cycle;
    }
    \node at (3,3) {\Large $\cdots$};
    
    }
    \end{tikzpicture}\\
    \caption{Connected graph with maximum value of $\imax$.}
    \label{fig:max_imax_connG}
\end{figure}
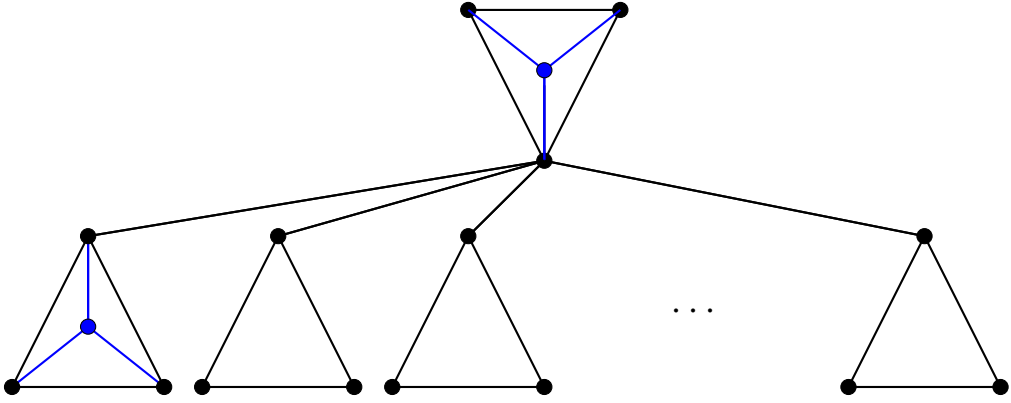

\subsection{The minimum number of maximal independent sets}

Whenever a graph contains at least one edge, we must clearly have $\imax(G) \ge 2$ (construct maximal independent sets greedily starting from each of the ends of one edge), and this bound is in fact attained by a star. As such, one can conclude that for trees or arbitrary connected graphs of order $n$, the best lower bound one can aim for is $\imax(G) \ge 2$.


If two leaves $v,v'$ of a tree have the same neighbour (we call such leaves \emph{twins}), then $v$ belongs to a maximal independent set if and only if $v'$ does, so we can consider them as essentially being one vertex. Generally, if two vertices $v$ and $v'$ of a graph have the same open neighbourhood, i.e., $N(v)=N(v')$, then every maximal independent set either contains both or neither.

So by duplicating vertices (adding new vertices with the same open neighbourhood as existing vertices), we can construct infinitely many graphs with the same number of maximal independent sets. This is also the reason why the question for the minimum number of maximal independent sets appears to be less interesting than its counterpart for the maximum. As we will see, it becomes more interesting if we forbid duplicated vertices: 

\begin{defi}
    A graph is called \emph{twin-free} if it has no two vertices with the same neighbours.
\end{defi}

Twin-free graphs have been studied in the past, e.g.~in~\cite{CHHL07} in connection with identifying codes.
Note that every graph can be reduced to a twin-free graph (its ``twin-free core'') by removal of duplicates without affecting the number of maximal independent sets. We therefore study the problem of determining the minimum of $\imax(G)$ for a twin-free graph $G$ of given order $n$ in different graph classes: specifically, arbitrary graphs, bipartite graphs, and trees.

We will see that for these three graph classes, the minimum of $\imax(G)$ is logarithmic, linear and exponential in terms of the order $n$, respectively.

Our first main result determines the minimum of $\imax(G)$ for arbitrary connected twin-free graphs of given order (in fact, it suffices to assume that there are no isolated vertices), and characterizes the extremal graph uniquely. We first prove that the extremal graph contains a clique of order $\imax(G)$, and that the neighbourhoods satisfy certain conditions. Using a result from extremal set theory, we can then conclude with the following theorem. This is presented in Section~\ref{sec:graphs}.

\begin{thr}\label{thr:imaxG}
    Let $k \ge 2$ be an integer.
    If $G$ is a twin-free graph of order $n$ without isolated vertices and $\imax(G) = k$, then $n \le 2^{k-1}+k-2$.
    Furthermore, equality holds only if the graph $G$ is formed by taking a clique $K_{k-1}$ and adding, for every non-empty vertex subset $S$ of this clique, a vertex whose neighbourhood is precisely $S$.
\end{thr}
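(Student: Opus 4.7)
My plan is to recast the claim as an extremal set-theoretic question by assigning to each vertex $v\in V(G)$ its \emph{signature} $\sigma(v)=\{i:v\in M_i\}\subseteq\{1,\dots,k\}$, where $M_1,\dots,M_k$ are the $k$ maximal independent sets of $G$. The first observation is that $v\sim v'$ holds if and only if $\sigma(v)\cap\sigma(v')=\varnothing$: the forward direction is immediate from the independence of each $M_i$, while for the reverse one extends the independent set $\{v,v'\}$ to a maximal independent set, which must be some $M_i$ and thereby gives $i\in\sigma(v)\cap\sigma(v')$. It follows that $\sigma(v)\neq\varnothing$ (every vertex lies in some MIS) and $\sigma(v)\neq\{1,\dots,k\}$ (a vertex contained in every MIS would be isolated, contradicting connectedness, since $k\geq 2$ forces $n\geq 2$).

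Next I would show that two distinct vertices with identical signatures must be twins: for any $w\in N(v)\setminus N(v')$, extending $\{v',w\}$ to an MIS produces an index lying in $\sigma(v')\setminus\sigma(v)$, contradicting $\sigma(v)=\sigma(v')$. Under the twin-free hypothesis the map $v\mapsto\sigma(v)$ is therefore injective, so $n=|\mathcal{F}|$ for the family $\mathcal{F}=\{\sigma(v):v\in V(G)\}$. Moreover $\mathcal{F}$ satisfies the \emph{Helly property}: a pairwise intersecting subfamily of $\mathcal{F}$ is (via the signature bijection) an independent set of $G$, which extends to an MIS, and by hypothesis the MIS are precisely the $k$ ``stars'' $\mathcal{F}_i=\{\sigma\in\mathcal{F}:i\in\sigma\}$; hence every pairwise intersecting subfamily of $\mathcal{F}$ is contained in some star and has a common element.

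The extremal set-theory step proceeds by grouping the $2^k$ subsets of $\{1,\dots,k\}$ into the $2^{k-1}$ complementary pairs $\{A,\{1,\dots,k\}\setminus A\}$. The pair $\{\varnothing,\{1,\dots,k\}\}$ contributes nothing to $|\mathcal{F}|$. The key lemma, which is where the Helly property becomes decisive, asserts that at most $k-1$ of the remaining $2^{k-1}-1$ complementary pairs have both members in $\mathcal{F}$: otherwise a careful selection of one side from each of $k$ such pairs would yield a pairwise intersecting subfamily with empty common intersection, contradicting Helly. Granting this bound,
\[
|\mathcal{F}|\;\leq\;2(k-1)+\bigl(2^{k-1}-1-(k-1)\bigr)\;=\;2^{k-1}+k-2.
\]
For the equality case, every non-trivial complementary pair must contribute at least one element and exactly $k-1$ pairs contribute two; a short case analysis forces $\mathcal{F}$, up to permutation of $\{1,\dots,k\}$, to equal $\{\{i\}:1\leq i\leq k\}\cup\{A:k\in A,\,A\neq\{1,\dots,k\}\}$. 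Translating back via the signature bijection, the singletons $\{1\},\dots,\{k-1\}$ correspond to the vertices of the $K_{k-1}$, the vertex with signature $\{k\}$ completes them to a $K_k$, and for each non-empty proper subset $S\subsetneq\{v_1,\dots,v_{k-1}\}$ the signature $(\{1,\dots,k-1\}\setminus S)\cup\{k\}$ corresponds to the external vertex with neighbourhood exactly $S$, reproducing the extremal graph. The main obstacle is the key lemma on complementary pairs in Helly families, which is the extremal set-theoretic heart of the argument.
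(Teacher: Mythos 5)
Your reduction to extremal set theory is sound, and it is essentially the complement--dual of the paper's: your signature family is a Helly family on $[k]$ avoiding $\emptyset$ and $[k]$, and under complementation ``Helly'' becomes exactly the paper's notion of a \emph{union-efficient} family (Definition~\ref{defi:unionefficient}). Your route is in fact slightly more direct, since you avoid the device of passing to a maximum-order counterexample and adjoining the clique $U$ of vertices with singleton signatures; your translation of the extremal family back into the extremal graph is also correct. The gap is in what you do next. The paper deliberately does \emph{not} prove the set-theoretic bound itself: it invokes Theorem~\ref{thr:mainthr_extrset}, a theorem of Milner whose uniqueness part required separate papers of Bollob\'as--Duchet and of Mulder. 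You instead rest everything on the ``key lemma'' that at most $k-1$ complementary pairs can be fully contained in the family, and your entire justification is that ``a careful selection of one side from each of $k$ such pairs would yield a pairwise intersecting subfamily with empty common intersection.'' That is precisely the non-trivial content, and as written it is not a proof: a pairwise intersecting transversal of the pairs can perfectly well have non-empty common intersection (hence violate nothing), and a transversal with empty common intersection need not be pairwise intersecting. Indeed, if the pairs are $\{A_i,\bar A_i\}$ with $A_1\subsetneq A_2\subsetneq\cdots\subsetneq A_m$ a chain, then the pairwise intersecting transversals are exactly $\bar A_1,\dots,\bar A_t,A_{t+1},\dots,A_m$, and each has common intersection $A_{t+1}\setminus A_t\neq\emptyset$; so for $m=k-1$ no selection works at all, and any proof of your lemma must genuinely exploit having $k$ pairs against only $k$ ground elements. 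The lemma does appear to be true (it amounts to the fact that a 2-SAT instance with $m$ unforced, pairwise inequivalent variables has at least $m+1$ solutions), but you have not proved it, and you acknowledge it is the ``main obstacle.''

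The equality case suffers from the same problem, compounded. Knowing only that exactly $k-1$ pairs are doubly represented and every other non-trivial pair contributes exactly one member does not by itself force the structure $\{\{i\}:i\le k\}\cup\{A: k\in A,\ A\neq[k]\}$; identifying the extremal configurations is the uniqueness statement in Theorem~\ref{thr:mainthr_extrset}, and ``a short case analysis'' is asserted rather than supplied. In summary: the graph-theoretic part of your argument is correct and clean, but the extremal set-theoretic heart --- both the bound $|\mathcal{F}|\le 2^{k-1}+k-2$ for Helly families and the characterization of equality --- is claimed, not proved. Either give a genuine proof of the key lemma together with the structure of the equality case, or do as the paper does and cite the known result on union-efficient (equivalently, under complementation, Helly) families.
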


The precise result for bipartite graphs is stated in the next theorem.
In the proof in Section~\ref{sec:bipgraphs}, we associate a unique maximal independent set to every vertex $v$ in one partition class, which together with the other partition class gives the lower bound on the number of maximal independent sets.

\begin{thr}\label{thm:conn_bipartite}
     Let $G$ be a twin-free bipartite graph of order $n \geq 2$ without isolated vertices.
    Then $\imax(G) \ge \ceilfrac{n}{2}+1$, and this inequality is sharp.
\end{thr}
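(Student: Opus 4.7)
The plan is to exploit the bipartite structure. Let $A\cup B$ be a bipartition of $G$ with $|B|\ge\ceilfrac{n}{2}$. Since $G$ is connected with $n\ge 2$, it has no isolated vertex, and so $A$ is already a maximal independent set (every $u\in B$ has a neighbour in $A$). The goal is to exhibit, for each $v\in B$, a maximal independent set $M_v$ containing $v$ in such a way that the map $v\mapsto M_v$ is injective; combined with $A$, this will give the required $|B|+1\ge\ceilfrac{n}{2}+1$ maximal independent sets.

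For $v\in B$ we define the downward closure $D(v)=\{u\in B:N(u)\subseteq N(v)\}$ and set
\[
M_v = D(v)\cup (A\setminus N(v)).
\]
The first step is to verify that $M_v$ is a maximal independent set. Since every $u\in D(v)$ has $N(u)\subseteq N(v)$, there are no edges between $D(v)\subseteq B$ and $A\setminus N(v)\subseteq A$, so $M_v$ is independent. For maximality, any $a\in N(v)\subseteq A$ is dominated by $v\in M_v$, and any $u\in B\setminus D(v)$ satisfies $N(u)\not\subseteq N(v)$ and hence has a neighbour in $A\setminus N(v)\subseteq M_v$.

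The second step is injectivity, where the twin-free assumption enters: if $v\ne v'$ in $B$, then $N(v)\ne N(v')$, so $M_v\cap A = A\setminus N(v)$ and $M_{v'}\cap A = A\setminus N(v')$ differ, whence $M_v\ne M_{v'}$. Since $v\in M_v\cap B$, each $M_v$ is furthermore distinct from $A$. The main subtlety lies in the choice of $M_v$: the naive candidate $\{v\}\cup (A\setminus N(v))$ can fail to be maximal when some $u\in B$ has $N(u)\subsetneq N(v)$ (then $u$ has no neighbour in the putative set, so it is forced into every MIS containing $v$), and adding the entire downward closure $D(v)$ is precisely the correction required.

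For the sharpness we would exhibit the following family. Take $A=\{a_1,\dots,a_{\bfloor{n/2}}\}$ and $B=\{v_1,\dots,v_{\bceil{n/2}}\}$, set $N(v_i)=\{a_1,\dots,a_i\}$ for $i\le\bfloor{n/2}$ (so that the neighbourhoods in $B$ form a chain under inclusion), and, when $n$ is odd, additionally set $N(v_{\bceil{n/2}})=\{a_2\}$; this requires $n\ge 5$, while the case $n=3$ is vacuous since no connected twin-free bipartite graph exists on three vertices. A direct enumeration of the downward-closed subsets of $B$ and the validity condition shows that the maximal independent sets of this graph are precisely $A$ together with the $M_{v_i}$, giving exactly $\bceil{n/2}+1$ of them.
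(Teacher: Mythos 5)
Your lower-bound argument is correct and is essentially identical to the paper's: the set you call $M_v = D(v) \cup (A\setminus N(v))$ is exactly the paper's $I_v = (A\setminus N(v)) \cup \{u\in B \mid N(u)\subseteq N(v)\}$, the injectivity of $v\mapsto M_v$ via twin-freeness is the same, and adding the class $A$ itself gives $|B|+1 \ge \ceilfrac{n}{2}+1$ in both proofs. (The paper phrases this as the slightly stronger statement $\imax(G)\ge b+1$ for twin-free bipartite graphs without isolated vertices with parts of sizes $a\le b$, which you implicitly also prove.) Where you genuinely diverge is the sharpness construction: the paper takes $K_{\floorfrac{n}{2},\ceilfrac{n}{2}}$ minus a matching of size $\ceilfrac{n}{2}-1$, whereas you take a ``half-graph'' whose neighbourhoods on the $B$-side form a chain, with one extra vertex attached to $a_2$ in the odd case. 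Your construction checks out: the possible neighbourhood-unions $N(S)$ for $\emptyset\ne S\subseteq B$ are exactly the $\ceilfrac{n}{2}$ sets $\{a_1,\dots,a_i\}$ together with $\{a_2\}$, each yielding a unique maximal independent set, and together with $A$ this gives exactly $\ceilfrac{n}{2}+1$; connectivity and twin-freeness are clear, and your handling of the small cases ($n=3$ vacuous, $n\ge5$ needed for the odd attachment) is right. Both constructions in fact satisfy the closure condition of the paper's Proposition~\ref{prop:equivmatr} (the union of any two rows of the reduced adjacency matrix is again a row), which is the structural reason either one works; the paper's matching-deletion example is perhaps easier to verify at a glance, while your chain example generalizes naturally and makes the ``unions of neighbourhoods'' mechanism more visible.
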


The last result determines the minimum number of maximal independent sets in twin-free trees. This was previously proven by Taletski\u{\i} and Malyshev in~\cite{TM20}. We give a shorter proof in Section~\ref{sec:proofmainthr_trees}, which starts by deriving the bounds, in contrast to~\cite{TM20} where the extremal graphs are characterized first by excluding certain structures in the extremal graphs.
We use an inductive approach, based on a lemma of Wilf~\cite{Wilf86}.

\begin{thr}\label{thr:main}
Let $n \ge 4$ be an integer.
Then for every twin-free tree $T$ with $n$ vertices, we have
$$\imax(T) \ge 
\begin{cases}
4\cdot 3^{\frac{n}{5}-1} & \text{ if } n\equiv 0 \pmod 5,\\
5\cdot 3^{\frac{n-6}{5}} & \text{ if } n\equiv 1 \pmod 5,\\ 
2\cdot 3^{\frac{n-2}{5}} & \text{ if } n\equiv 2 \pmod 5,\\ 
8\cdot 3^{\frac{n-8}{5}} & \text{ if } n\equiv 3 \pmod 5,\\
3^{\frac{n+1}{5}}   & \text{ if } n\equiv 4 \pmod 5,
\end{cases}$$
and this inequality is sharp.
\end{thr}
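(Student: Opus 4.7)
Let $f(n)$ denote the right-hand side of the claimed lower bound. The plan is to proceed by strong induction on $n$. The base cases for small $n$ (say $n \le 9$) can be handled by directly enumerating the twin-free trees of each small order and checking $\imax(T) \ge f(n)$; with only finitely many such trees, this is a finite verification.

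\textbf{Inductive step and recurrence.} For the inductive step, let $T$ be a twin-free tree of order $n$ large enough, and take a longest path $v_0 v_1 v_2 \ldots v_d$ with $v_0$ a leaf. The combination of twin-freeness and path-maximality forces $\deg_T(v_1) = 2$: any second leaf at $v_1$ would be a twin of $v_0$, and any deeper pendant subtree at $v_1$ would extend the path. The lemma of Wilf referenced in the introduction then supplies the standard leaf-recurrence
\[
\imax(T) \;=\; \imax(T - \{v_0, v_1\}) \;+\; \imax(T - \{v_0, v_1, v_2\}),
\]
obtained by partitioning MIS of $T$ according to whether they contain $v_1$ (using that an MIS not containing $v_1$ must contain $v_0$, while an MIS containing $v_1$ avoids $\{v_0, v_2\}$). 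I would then case-analyse the local structure around $v_2$: its degree, and whether it has pendant leaves or pendant $P_2$'s (the deepest pendant subtrees allowed by path-maximality). In each case, both reduced pieces are forests of smaller order, and I apply the induction hypothesis to each component. The required arithmetic inequalities --- such as $f(n-2) + f(n-3) \ge f(n)$ and $2f(n-3) \ge f(n)$ --- are easily verified from the explicit formula for $f$ by checking each of the five residue classes of $n$ modulo $5$.

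\textbf{Main obstacle and sharpness.} The principal technical obstacle is that the reductions may fail to be twin-free: for instance, if $v_3$ already has a pendant leaf $y$ in $T$, then $v_2$ (now a leaf of $T - \{v_0, v_1\}$) becomes a twin of $y$. When this happens, the twin-free core of the reduction has the same $\imax$-value but strictly fewer vertices, and one must either verify a strengthened arithmetic inequality on $f$ that closes the induction anyway, or instead choose a different peel (typically a larger pendant subtree that also absorbs the offending leaf at $v_3$) so that twin-freeness is preserved. Handling each such sub-case cleanly is the bulk of the technical work. Sharpness is then established by exhibiting, for each residue class of $n$ modulo $5$, explicit twin-free trees that attain the bound: the smallest is $P_4$, already twin-free with $\imax(P_4) = 3$, and larger extremal examples are caterpillar-like trees built by attaching pendant subtrees along a path in a twin-free manner so that $\imax$ grows by a factor of $3$ for every $5$ extra vertices.
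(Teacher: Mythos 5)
Your overall strategy---induction on $n$ via Wilf's leaf recurrence together with super-additivity of $f$---belongs to the same family of ideas as the paper's proof, and your choice of a leaf at the end of a longest path is a genuine and attractive variation: it forces $\deg(v_1)=2$, so Wilf's formula collapses to the two-term recurrence you state, and this would in principle let you bypass the majorization machinery the paper needs to combine the several branches $U_i$ hanging off a high-degree neighbour of an arbitrary leaf. The recurrence itself, the base cases, and the sharpness examples are all fine.

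The gap is that the case analysis you defer is where the entire difficulty of the theorem lives, and it cannot be closed by ``verifying arithmetic inequalities on $f$'' alone, because some of the inequalities one would need are false. Concretely, suppose $v_2$ carries a pendant leaf $z$ (compatible with twin-freeness and with $v_0\cdots v_d$ being a longest path), that $\deg(v_3)=2$, and that $v_4$ has a pendant leaf. Then in the forest $T-\{v_0,v_1,v_2\}$ the component $\{z\}$ contributes only a factor $f(1)=1$, and the large component containing $v_3$ is not twin-free (its new leaf $v_3$ is a twin of the pendant leaf at $v_4$), so componentwise induction yields only $\prod_j \imax(W_j)\ge f(n-5)$; combined with $\imax(T-\{v_0,v_1\})\ge f(n-2)$ this gives $f(n-2)+f(n-5)$, which is strictly smaller than $f(n)$ when $n\equiv 1\pmod 5$ (for $n=16$: $27+15=42<45$). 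What rescues this case is not arithmetic but structure: applying Wilf's formula a second time to $T-\{v_0,v_1\}$, rooted at the leaf $z$, gives $\imax(T-\{v_0,v_1\})\ge 2\prod_j \imax(W_j)$, and this doubling is exactly the third alternative ($\imax(U_i)\ge 2f(m-3)$) in the paper's key lemma; it is calibrated with no slack, since $2f(n-5)+f(n-5)=3f(n-5)=f(n)$. Several of your other inequalities also hold with equality (e.g.\ $2f(n-3)=f(n)$ for $n\equiv 0,2,3\pmod 5$), so there is no room to absorb any further loss from non-twin-free pieces. Until the full case analysis is carried out and the doubling argument (or an equivalent) is supplied, the induction does not close.
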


For every $n$, Figure~\ref{fig:construction_mod5} depicts one example of a tree for which equality holds in Theorem~\ref{thr:main}. Here, the blue and red edges possibly have to be added depending on $n \pmod 5$.
For large $n$, there are multiple extremal trees. In Subsection~\ref{subsec:extrtrees} we briefly make some comments on the characterization, which also already appears in~\cite{TM20}. 

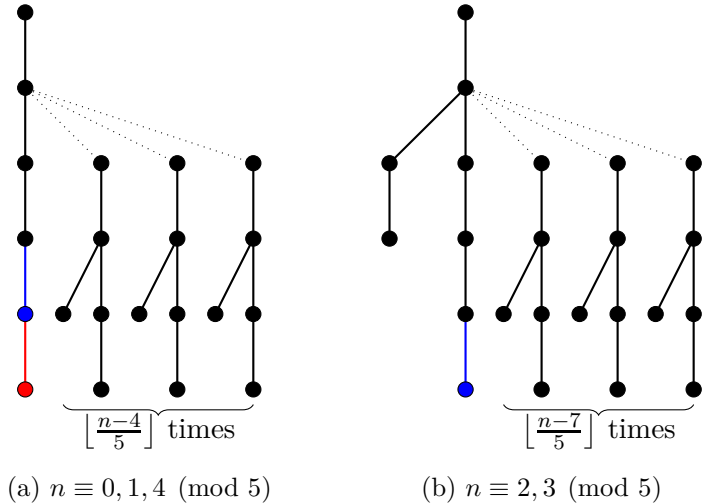
\begin{figure}[h]
\begin{center}
\begin{minipage}[b]{.3\linewidth}
\begin{center}
    \begin{tikzpicture}
    {

    \draw[thick] (1,5) -- (1,6);
    \draw[fill] (1,5) circle (0.1);
    \draw[fill] (1,6) circle (0.1);

    \foreach \x in {1}
    {
    \draw[thick, color=red] (1,\x) -- (1,\x+1);
    \draw[fill=red] (1,\x) circle (0.1);
    }
    
    \foreach \x in {2}
    {
    \draw[thick, color=blue] (1,\x) -- (1,\x+1);
    \draw[fill=blue] (1,\x) circle (0.1);
    }

    \foreach \x in {3,4}
    {
    \draw[thick] (1,\x) -- (1,\x+1);
    \draw[fill] (1,\x) circle (0.1);
    }

    \foreach \a in {2,3,4}
    {
    \draw[dotted] (\a,4) -- (1,5);
    \draw[fill] (\a,4) circle (0.1);
        \foreach \x in {1,2,3}
    	{
    	\draw[thick] (\a,\x) -- (\a,\x+1);
    	\draw[fill] (\a,\x) circle (0.1);
    	}
    \draw[thick] (\a,3) -- (\a-0.5,2);
    \draw[fill] (\a-0.5,2) circle (0.1);
    }
    \draw [decorate,decoration={brace,amplitude=4pt},xshift=0pt,yshift=0pt] (4,0.8) -- (1.5,0.8) node [black,midway,yshift=-0.35cm]{$\floorfrac{n-4}5 $ times};
	}
	\end{tikzpicture}\\
\subcaption{$n \equiv 0,1,4 \pmod 5$}
\label{fig:n1mod5}
\end{center}
\end{minipage}
\quad
\begin{minipage}[b]{.3\linewidth}
\begin{center}
    \begin{tikzpicture}
    {
    
    \draw[thick, color=blue] (1,1)--(1,2);
    \draw[fill=blue] (1,1) circle (0.1);

    \draw[thick] (0,4)--(1,5) -- (1,6);
    \draw[fill] (1,5) circle (0.1);
    \draw[fill] (1,6) circle (0.1);
    \draw[fill] (0,4) circle (0.1);

    \foreach \x in {3}
    {
    \draw[thick] (0,\x) -- (0,\x+1);
    \draw[fill] (0,\x) circle (0.1);
    }
    
    \foreach \x in {2,3,4}
    {
    \draw[thick] (1,\x) -- (1,\x+1);
    \draw[fill] (1,\x) circle (0.1);
    }
    
    \foreach \a in {2,3,4}
    {
    \draw[dotted] (\a,4) -- (1,5);
    \draw[fill] (\a,4) circle (0.1);
        \foreach \x in {1,2,3}
    	{
    	\draw[thick] (\a,\x) -- (\a,\x+1);
    	\draw[fill] (\a,\x) circle (0.1);
    	}
    \draw[thick] (\a,3) -- (\a-0.5,2);
    \draw[fill] (\a-0.5,2) circle (0.1);
    }
    \draw [decorate,decoration={brace,amplitude=4pt},xshift=0pt,yshift=0pt] (4,0.8) -- (1.5,0.8) node [black,midway,yshift=-0.35cm]{$\floorfrac{n-7}5$ times};
	}
	\end{tikzpicture}\\
\subcaption{$n \equiv 2,3 \pmod 5$}
\label{fig:n3mod5}
\end{center}
\end{minipage}
\end{center}
\caption{Constructions that attain equality in Theorem~\ref{thr:main}.}\label{fig:construction_mod5}
\end{figure}

Finally, in Section~\ref{sec:conc}, we conclude with some further directions related to the minimum of $\imax(G)$ for twin-free graphs in other classes and some other related questions.

\section{Twin-free graphs
}\label{sec:graphs}

We start with the proof that the minimum of the number of maximal independent sets in a connected twin-free graph is logarithmic in the order.
We first provide the extremal construction and give a short argument for the lower bound $\imax(G) > \log_2(n)$. The more precise Theorem~\ref{thr:imaxG} is proven afterwards in Subsection~\ref{subsec:actualproof}.

\begin{prop}\label{prop:imaxG_constr}
    There exists a connected twin-free graph with $n=2^{k-1}+k-2$ vertices for which $\imax(G)=k$.
\end{prop}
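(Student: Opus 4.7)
The construction is forced on us by the second half of Theorem~\ref{thr:imaxG}, so the plan is simply to write it down carefully and verify the three properties: vertex count, twin-freeness, and $\imax(G)=k$.

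First I would fix the construction. Let $K$ be a clique on $k-1$ vertices $v_1,\dots,v_{k-1}$, and for each non-empty subset $S\subseteq V(K)$ introduce a new vertex $u_S$ whose neighbourhood is exactly $S$. The vertex count is $(k-1)+(2^{k-1}-1)=2^{k-1}+k-2$, as required. Connectedness is immediate: the clique $K$ is connected and every $u_S$ has a neighbour in $K$ because $S$ is non-empty.

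Next I would verify that the graph is twin-free. For two distinct added vertices $u_S,u_{S'}$, we have $N(u_S)=S\neq S'=N(u_{S'})$. For two clique vertices $v_i\neq v_j$, both contained in $V(K)\cup\{u_{\{v_i\}},u_{\{v_j\}},\dots\}$: one checks that $v_j\in N(v_i)\setminus N(v_j)$, so their neighbourhoods differ. Finally, a clique vertex $v_i$ is adjacent to $u_{\{v_i\}}$, so $N(v_i)$ contains an added vertex, whereas $N(u_S)\subseteq V(K)$ contains no added vertex; hence no clique vertex can be a twin of any added vertex.

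The key step is to enumerate the maximal independent sets. The added vertices form an independent set among themselves (they are only adjacent to clique vertices), so in any maximal independent set $I$ exactly two cases can occur. If $I$ contains no vertex of $K$, then by maximality $I$ must contain \emph{all} added vertices $u_S$; and indeed this set is maximal since every $v_i$ is blocked by $u_{\{v_i\}}\in I$. If $I$ contains some vertex of $K$, then since $K$ is a clique it contains exactly one, say $v_i$; by maximality $I$ must then contain every $u_S$ with $v_i\notin S$ (these are non-adjacent to $v_i$) and cannot contain any $u_S$ with $v_i\in S$. This determines $I$ uniquely from $v_i$. Altogether this yields $1+(k-1)=k$ maximal independent sets.

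I do not expect any real obstacle: the construction dictates the argument, and the only care needed is in the case analysis for the maximal independent sets, in particular confirming that the ``all added vertices'' set is genuinely maximal and that no two choices of $v_i$ give the same set.
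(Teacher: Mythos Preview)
Your proposal is correct and follows essentially the same approach as the paper: the same construction, the same case split on $|I\cap V(K)|\in\{0,1\}$, and the same conclusion that $\imax(G)=1+(k-1)=k$. You are in fact a little more explicit than the paper in verifying connectedness and checking all three pairs of vertex types for twin-freeness, but this is elaboration rather than a different route.
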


\begin{proof}
    Let $K_{k-1}$ be a clique on $k-1$ vertices.
    For every non-empty vertex subset $S$ of this clique,
    we add a new vertex whose neighbours are exactly the vertices in $S$.
    Then the graph $G$ obtained by this process has $k-1+(2^{k-1}-1)$ vertices, and all neighbourhoods are different, i.e., $G$ is twin-free.
    Every independent set contains at most one vertex of the clique $K_{k-1}$.
    We note that a maximal independent set $I$ is uniquely determined by $I \cap V(K_{k-1})$, since $V(G)\setminus V(K_{k-1})$ is an independent set.
    If $I \cap V(K_{k-1}) = \emptyset$, then $V(G)\setminus V(K_{k-1})$ becomes a maximal independent set on its own.
     If $I \cap V(K_{k-1}) =\{v\}$, then $I=V(G) \backslash N(v).$
    As such, we conclude that $\imax(G)=k$.
\end{proof}

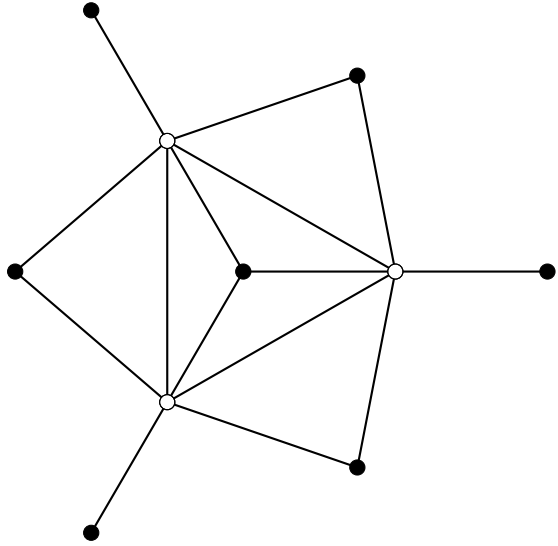
\begin{figure}[h]

\begin{center}
    \begin{tikzpicture}
    
    \draw[fill] (0,0) circle (0.1);
    \foreach \x in {1,2,3}
    {
    \draw[thick] (0:0)--(120*\x:2) -- (120*\x+120:2)--(120*\x+60:3)--(120*\x:2);
    
    \draw[fill] (120*\x+60:3) circle (0.1);
    \draw[thick] (120*\x:2)--(120*\x:4);
    \draw[fill] (120*\x:4) circle (0.1);
    }
    \foreach \x in {1,2,3}
    {
    \draw[fill=white] (120*\x:2) circle (0.1);
    \draw[] (120*\x:2) circle (0.1);
    }		
    \end{tikzpicture}
\end{center}
\caption{The construction in Proposition~\ref{prop:imaxG_constr} for $k=4$. The clique $K_3$ is indicated in white.}\label{fig:constructionGraph}
\end{figure}

On the other hand, we can prove a lower bound that is based on the following observation.

\begin{lem}\label{lem:twinfree<->uniquevectors}
    Two vertices $u$ and $v$ belong to the same maximal independent sets of a graph $G$ if and only if they are twins.
\end{lem}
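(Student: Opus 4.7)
The plan is to prove both directions directly from the definitions, with the reverse direction handled by contrapositive.

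For the ``if'' direction, I will assume $N(u) = N(v)$ and suppose $I$ is a maximal independent set containing $u$. I want to conclude $v \in I$, and by symmetry this will finish this direction. The key observation is that $N(u) = N(v)$ forces $u \notin N(v)$ (since we work in simple graphs, $u \notin N(u) = N(v)$), so $u$ and $v$ are non-adjacent. For any other $w \in I \setminus \{u\}$ we have $w \notin N(u) = N(v)$, so $v$ is non-adjacent to $w$ as well. Hence $I \cup \{v\}$ is independent, and by maximality $v \in I$.

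For the ``only if'' direction, I will prove the contrapositive: if $N(u) \neq N(v)$, I exhibit a maximal independent set containing exactly one of them. I split into two cases. If $uv \in E(G)$, then any maximal independent set extending $\{u\}$ contains $u$ but cannot contain the adjacent $v$. If $uv \notin E(G)$, then since $N(u) \neq N(v)$, without loss of generality there is a vertex $w \in N(u) \setminus N(v)$; note $w \neq u$ (no self-loops) and $w \neq v$ (otherwise $v \in N(u)$, contradicting $uv \notin E(G)$). So $\{v, w\}$ is independent and can be extended to a maximal independent set $I$; then $v \in I$ and $w \in I$ force $u \notin I$ because $u \in N(w)$.

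This case analysis is completely routine; there is no real obstacle, only the need to handle the degenerate possibilities $w = u$ and $w = v$ carefully when choosing the distinguishing vertex. Since the proof amounts to a few lines of straightforward verification, I would present it as a compact two-paragraph argument, one per direction, with the contrapositive setup stated explicitly to avoid confusion.
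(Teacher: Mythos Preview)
Your proof is correct and follows essentially the same approach as the paper's: both handle the forward direction by noting that adjacent vertices are separated trivially, and that otherwise a distinguishing neighbour $w$ lets one extend $\{v,w\}$ to a maximal independent set excluding $u$; the paper dismisses the converse as ``immediate'' where you spell it out.
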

\begin{proof}
    If $u$ and $v$ belong to the same maximal independent sets, then they cannot be neighbours, and they must have exactly the same neighbours: if for instance there is a vertex $w$ adjacent to $u$, but not to $v$, then there would be a maximal independent set containing $\{v,w\}$, but not~$u$. Hence $u$ and $v$ have to be twins. The converse is immediate.
\end{proof}

The following proposition already shows that the minimum of $\imax$ is of logarithmic order.

\begin{prop}\label{prop:imax<logn}
    For every connected twin-free graph $G$ with $n$ vertices, $\imax(G)>\log_2(n).$
\end{prop}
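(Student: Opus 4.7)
The plan is to use Lemma~\ref{lem:twinfree<->uniquevectors} to set up an injection from the vertex set of $G$ into the power set of the collection of maximal independent sets of $G$, minus the empty set, which would immediately yield $n \le 2^{\imax(G)}-1$ and hence the desired bound.

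In more detail, for each vertex $v \in V(G)$, define $\M(v)$ to be the set of maximal independent sets of $G$ that contain $v$. First I would observe that $\M(v)$ is never empty: starting from the independent set $\{v\}$ and greedily adding vertices until no more can be added yields a maximal independent set containing $v$, so $\M(v) \subseteq 2^{\imax(G)}$ is a nonempty subset of the set of maximal independent sets. Next, by Lemma~\ref{lem:twinfree<->uniquevectors}, two vertices $u,v$ satisfy $\M(u)=\M(v)$ if and only if they are twins; since $G$ is twin-free, the map $v \mapsto \M(v)$ is injective.

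Combining injectivity with nonemptiness, the number of vertices is at most the number of nonempty subsets of a set of size $\imax(G)$, so
\[
n \le 2^{\imax(G)}-1 < 2^{\imax(G)},
\]
and taking $\log_2$ of both sides gives $\imax(G)>\log_2(n)$.

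There is no real obstacle: once Lemma~\ref{lem:twinfree<->uniquevectors} is in hand, the argument is just the standard ``distinct characteristic vectors'' trick, and the only thing to check is that no vertex has an empty characteristic vector, which is immediate from greedy extension. The only mild subtlety is that connectedness of $G$ is not actually used in this bound — it is used to ensure the construction in Proposition~\ref{prop:imaxG_constr} matches up, but the lower bound holds for all twin-free graphs.
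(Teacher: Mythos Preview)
Your proof is correct and essentially identical to the paper's: both associate to each vertex the set (equivalently, characteristic vector) of maximal independent sets containing it, observe this is nonempty by greedy extension and injective by Lemma~\ref{lem:twinfree<->uniquevectors}, and conclude $n \le 2^{\imax(G)}-1$. Your remark that connectedness is not needed here is also accurate and noted later in the paper.
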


\begin{proof}
    Let $k=\imax(G)$, and let the maximal independent sets of $G$ be $I_1,\ldots,I_k$. For every vertex $v$, consider the associated vector $\Vec{v}=\left( 1_{v \in I_1}, 1_{v \in I_2}, \ldots, 1_{v \in I_k} \right)$. Since every vertex belongs to at least one maximal independent set, no vertex gets assigned the zero vector. Moreover, Lemma~\ref{lem:twinfree<->uniquevectors} shows that no two vertices have the same associated vector, as $G$ is assumed to be twin-free.
    Since there are only $2^k-1$ distinct non-zero vectors, we conclude that $n \le 2^k-1$, i.e., $k>\log_2(n)$.
\end{proof}

This argument is further refined in the following subsection.

\subsection{Proof of Theorem~\ref{thr:imaxG}}\label{subsec:actualproof}

Let us now prove 
Theorem~\ref{thr:imaxG}, whose statement we first recall.
 
\begin{customthm} {\bf \ref{thr:imaxG}}
    Let $k \ge 2$ be an integer.
    If $G$ is a twin-free graph of order $n$ without isolated vertices and $\imax(G) = k$, then $n \le 2^{k-1}+k-2$.
    Furthermore, equality holds only if the graph $G$ is formed by taking a clique $K_{k-1}$ and adding, for every non-empty vertex subset $S$ of this clique, a vertex whose neighbourhood is precisely $S$.
\end{customthm}

For $k=2$, the statement holds by Proposition~\ref{prop:imax<logn} and the fact that $i_{max}(K_3) = 3 >2.$

Suppose for contradiction that Theorem~\ref{thr:imaxG} does not hold, and let $k \ge 3$ be the minimum value for which this is the case.
Let $G$ be a graph of maximum order for which $\imax(G)=k$, and let $I_1,\ldots,I_k$ be its maximal independent sets. As in the proof of Proposition~\ref{prop:imax<logn}, we can conclude that every vertex of $G$ belongs to at least one of these sets.

 \begin{lem}\label{lem:def_U}
    Every maximal independent set $I_i$ has a vertex $u_i$ that belongs to no maximal independent set $I_j$ with $j \neq i$.
 \end{lem}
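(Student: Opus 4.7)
The plan is to argue by contradiction, using the choice of $G$ as a connected twin-free graph of maximum order with $\imax(G)=k$. Suppose some $I_i$ has no private vertex, i.e., every $v\in I_i$ also lies in at least one $I_j$ with $j\neq i$. I would then enlarge $G$ by attaching a single new vertex $w$ whose neighbourhood is exactly $V(G)\setminus I_i$, obtaining a graph $G^+$, and show that $G^+$ is a connected twin-free graph on $n+1$ vertices with $\imax(G^+)=k$, contradicting the maximality of $|V(G)|$.

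The first step is to determine the maximal independent sets of $G^+$. Since $w$ is non-adjacent to $I_i$ and adjacent to everything else, any maximal independent set of $G^+$ containing $w$ must equal $I_i\cup\{w\}$. For the maximal independent sets not containing $w$, note that $w$ can be appended to $I_i$ but to no other $I_j$ (any $j\neq i$ gives a vertex in $I_j\setminus I_i\subseteq N(w)$), so these are precisely the sets $I_j$ with $j\neq i$. This yields $\imax(G^+)=(k-1)+1=k$. Connectedness of $G^+$ follows from $V(G)\setminus I_i\neq\emptyset$, which holds because $k\geq 3$ forces $G$ to contain at least one edge.

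The delicate step is verifying that $G^+$ remains twin-free. Two old vertices $u,v$ stay non-twins in $G^+$ because either they were already separated in $G$ (if both lie in $I_i$ or both lie outside) or they become separated by $w$ (if exactly one of them is in $I_i$). The only remaining case is whether $w$ itself is a twin of some $v\in V(G)$; since $v\notin N_{G^+}(w)$, this would require $v\in I_i$ together with $N_G(v)=V(G)\setminus I_i$. But then every maximal independent set containing $v$ must avoid all of $N_G(v)=V(G)\setminus I_i$, and hence is contained in $I_i$ and equals $I_i$ by maximality; this would make $v$ a private vertex of $I_i$, contradicting our assumption.

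The main obstacle is exactly this last implication: translating the absence of a private vertex in $I_i$ into the twin-freeness of $G^+$. Once this is in place, $G^+$ is a connected twin-free graph strictly larger than $G$ with $\imax(G^+)=k$, contradicting the choice of $G$ and completing the proof.
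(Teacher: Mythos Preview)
Your proposal is correct and follows essentially the same approach as the paper: both argue by contradiction, adjoin a new vertex adjacent to all of $V(G)\setminus I_i$, verify that the maximal independent sets are unchanged except that $I_i$ is extended by the new vertex, and conclude that this contradicts the maximality of $|V(G)|$. The only cosmetic difference is that the paper dispatches the twin-freeness of the enlarged graph by a direct appeal to Lemma~\ref{lem:twinfree<->uniquevectors} (the new vertex lies in a unique maximal independent set, hence has a vector distinct from every old vertex by hypothesis), whereas you unwind this argument by hand; the content is the same.
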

 \begin{proof}
     If this is not the case, add a vertex $u_i$ to the graph that is adjacent to all vertices except those in $I_i$. Then $I_i$ is extended by this new vertex, but all other maximal independent sets remain the same. Note that the new vertex does not belong to any maximal independent sets other than the extension of $I_i$. By Lemma~\ref{lem:twinfree<->uniquevectors}, the new graph $G'$ is still twin-free, and $\imax(G') = \imax(G)$, contradicting the choice of $G$. 
 \end{proof}
 
 \begin{lem}\label{lem:imaxGEchiGEomega}
     In every graph $H$, we have $\imax(H) \ge \chi(H) \ge \omega(H)$.
 \end{lem}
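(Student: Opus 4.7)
The plan is to prove the two inequalities separately, starting with the easier one $\chi(H) \ge \omega(H)$ and then handling $\imax(H) \ge \chi(H)$.

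For $\chi(H) \ge \omega(H)$, I would simply observe that in any proper coloring of $H$, the vertices of a maximum clique must all receive different colours, since they are pairwise adjacent. Hence any proper colouring uses at least $\omega(H)$ colours, giving the bound immediately.

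For $\imax(H) \ge \chi(H)$, I would start from an optimal proper colouring with colour classes $C_1, C_2, \ldots, C_{\chi(H)}$, each of which is an independent set. For each $i$, greedily extend $C_i$ to a maximal independent set $M_i$ of $H$. The claim is that $M_1, M_2, \ldots, M_{\chi(H)}$ are pairwise distinct, which then gives $\imax(H) \ge \chi(H)$. The key step is to rule out $M_i = M_j$ for some $i \ne j$: in that case, $C_i \cup C_j \subseteq M_i$ would be an independent set, so one could re-colour every vertex of $C_j$ with colour $i$ and obtain a proper colouring using only $\chi(H) - 1$ colours, contradicting the optimality of $\chi(H)$.

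I do not anticipate any real obstacle; the only subtle point is justifying that the extensions $M_i$ are genuinely distinct, and the swap argument above takes care of that. No detailed calculation is needed.
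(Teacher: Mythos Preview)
Your argument is correct and essentially identical to the paper's: both extend each colour class of an optimal proper colouring to a maximal independent set and use the fact that the union of two colour classes cannot be independent (equivalently, your swap argument) to see these extensions are pairwise distinct. The second inequality is dispatched the same standard way.
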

 \begin{proof}
     It is sufficient to observe that every colour class of an optimal proper colouring is an independent set, but the union of two colour classes is not.
     Since every independent set can be (e.g.~greedily) extended to at least one maximal independent set, $\imax(H) \ge \chi(H)$ is immediate.
     The inequality $\chi(H) \ge \omega(H)$ is well known.
 \end{proof}

 In the graph $G$ that we took as counterexample to Theorem~\ref{thr:imaxG}, any two vertices $u_i$ and $u_j$ as defined in Lemma~\ref{lem:def_U} with $i \neq j$ have to be adjacent. Otherwise, $\{u_i,u_j\}$ could be extended to a maximal independent set, contradicting the choice of $u_i$ and $u_j$. So $U=\{u_1, \ldots, u_k\}$ spans a complete graph, implying that $\omega(G) \geq k = \imax(G)$, which means that we have equality in Lemma~\ref{lem:imaxGEchiGEomega} when applied to $G$.

\begin{lem}
A vertex $v$ belongs to $I_j$ if and only if it is not adjacent to $u_j$.
\end{lem}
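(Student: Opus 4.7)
The plan is to prove the two directions separately, both using very little machinery beyond the defining property of $u_j$ established in Lemma~\ref{lem:def_U}.

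The forward direction is essentially immediate: if $v \in I_j$ and $v \notin U$, then in particular $v \neq u_j$, so the fact that both $v$ and $u_j$ lie in the independent set $I_j$ forces $v \not\sim u_j$. No obstacle here.

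For the backward direction, I would argue by extension. Suppose $v \not\sim u_j$. Then $\{v, u_j\}$ is an independent set, so it can be extended (greedily, as in Lemma~\ref{lem:imaxGEchiGEomega}) to some maximal independent set $I$. Since $u_j \in I$, the defining property of $u_j$ from Lemma~\ref{lem:def_U}, namely that $u_j$ lies in no maximal independent set other than $I_j$, forces $I = I_j$. Consequently $v \in I_j$, as desired.

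There is essentially no main obstacle: the entire content lies in unpacking the uniqueness property of the $u_j$'s, which is why Lemma~\ref{lem:def_U} was set up beforehand. The lemma will then be used in the sequel to show that the neighbourhoods of non-$U$ vertices in $U$ are distinct non-empty subsets, feeding into the counting argument against $n > 2^{k-1} + k - 2$.
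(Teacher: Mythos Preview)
Your proposal is correct and essentially identical to the paper's own proof: both directions are argued exactly as you describe, using the fact that $u_j \in I_j$ for the forward implication and extending $\{v,u_j\}$ to a maximal independent set (which must be $I_j$ by Lemma~\ref{lem:def_U}) for the converse.
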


\begin{proof} If $v$ belongs to $I_j$, then it can clearly not be adjacent to $u_j$ (which also belongs to $I_j$). Conversely, if $v,u_j$ are not adjacent, then $\{v,u_j\}$ can be extended to a maximal independent set, which must necessarily be $I_j$ (by Lemma~\ref{lem:def_U}).
\end{proof}

Remembering that the vectors $\Vec{v}$ as defined in the proof of Proposition~\ref{prop:imax<logn} are all distinct and neither equal to the all-$0$ vector (as every vertex belongs to a maximal independent set) nor the all-$1$ vector (the corresponding vertex would be isolated), we know that every vertex has at least one neighbour in the clique induced by $U$, and no two vertices $v,v' \notin U$ satisfy $N(v) \cap U = N(v') \cap U$.

\begin{lem}\label{lem:adj_cond}
    Two vertices $v,v'$ are adjacent if and only if $U \subseteq N(v) \cup N(v')$.
\end{lem}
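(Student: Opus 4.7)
The plan is to deduce both directions directly from the preceding characterization, namely that for any $v \notin U$, $v \in I_j$ if and only if $v$ is not adjacent to $u_j$. This turns the adjacency condition into a statement about simultaneous membership in maximal independent sets.

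For the forward direction, I would argue: if $v$ and $v'$ are adjacent, then no independent set (in particular, no maximal independent set $I_j$) contains both. Hence for every $j \in \{1, \ldots, k\}$, at least one of $v, v'$ lies outside $I_j$, so by the preceding characterization at least one of them is adjacent to $u_j$. That is, $u_j \in N(v) \cup N(v')$ for every $j$, giving $U \subseteq N(v) \cup N(v')$.

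For the converse, I would use the contrapositive. Suppose $v$ and $v'$ are not adjacent; then $\{v, v'\}$ is an independent set and can be extended greedily to some maximal independent set $I_j$. Since both $v$ and $v'$ lie in $I_j$, neither is adjacent to $u_j$, so $u_j \notin N(v) \cup N(v')$. Therefore $U \not\subseteq N(v) \cup N(v')$, completing the equivalence.

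There is no real obstacle here: the previous lemma has already reduced membership in $I_j$ to a simple neighbourhood condition with respect to $u_j$, and the lemma follows by pairing this up for the two vertices $v, v'$. The only care needed is to remember that the extension step in the backward direction is legitimate precisely because $\{v, v'\}$ is then an independent set, and that the $u_j$ guaranteed by Lemma~\ref{lem:def_U} is unique to $I_j$, so adjacency to $u_j$ is the right criterion.
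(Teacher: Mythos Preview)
Your proposal is correct and essentially matches the paper's proof: both directions hinge on the fact that $u_j$ lies in exactly one maximal independent set $I_j$, and the converse is argued identically by extending $\{v,v'\}$ to some $I_j$. The only cosmetic difference is in the forward direction, where you invoke the preceding characterization ($v\in I_j \Leftrightarrow u_j\notin N(v)$) directly, while the paper instead re-derives it by extending $\{u_j,v\}$ and $\{u_j,v'\}$ separately to reach a contradiction with Lemma~\ref{lem:def_U}; your phrasing is marginally cleaner but the content is the same.
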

\begin{proof}
    If $v,v'$ are adjacent and $u \in U$ does not belong to $N(v) \cup N(v')$, then $\{u,v\}$ and $\{u,v'\}$ can be extended to distinct maximal independent sets containing $u$. This contradicts the construction of $U$ (Lemma~\ref{lem:def_U}), so $U \subseteq N(v) \cup N(v')$ in this case.
    
    Conversely, if $v,v'$ are not adjacent, then the set $\{v,v'\}$ can be extended to a maximal independent set, which must contain a vertex of $U$ by construction. As this vertex is not in $N(v) \cup N(v')$, $U$ is not fully contained in $N(v) \cup N(v')$ in this case.
\end{proof}

If the union of the neighbourhoods of the vertices in an independent set in $V(G) \setminus U$ contains all of $U$, it can be extended to a maximal independent set without any vertices in $U$, leading to a contradiction again.
Equivalently, if the union of the neighbourhoods of some vertices in $V(G) \setminus U$ contains all of $U$, then it cannot be an independent set, so it must contain two adjacent vertices whose neighbourhoods cover $U$ by Lemma~\ref{lem:adj_cond}. This observation naturally leads us to the following concept.

\begin{defi}\label{defi:unionefficient}
    We call a family $\F \subseteq 2^{[n]}$ \emph{union-efficient} if for every subfamily $\{A_1, \ldots, A_{m}\}$ of $\F$ for which $\cup_{i \in [m]} A_i = [n]$, there are two indices $i,j \in [m]$ for which $A_i \cup A_j =[n].$
\end{defi}

For every vertex $v_i \in V(G) \setminus U$, we consider the index set $A_i=\{j \mid u_j \in N(v_i), 1 \le j \le k\}$. As explained before, the family $\F \subseteq 2^{[k]}$ consisting of all these $A_i$ is union-efficient.

Note that $\F$ does not include $\emptyset, [k]$ (as observed before), nor $[k]\setminus j$ for any $j \in [k]$. 
The latter holds because $A_i = [k] \setminus j$ would mean that
$v_i$ is adjacent to (a) all vertices in $U$ except $u_j$, and
(b) precisely those vertices in $V(G) \setminus U$ that are adjacent to $u_j$ (by Lemma~\ref{lem:adj_cond}). But then $v_i$ and $u_j$ would be twins, a contradiction. It is easy to see that $\F$ stays union-efficient if we add these $k+2$ sets to $\F$.

Formulated in the terminology of union-efficient families (the equivalence is explained in~\cite{CS22+}), the following theorem is a result by Milner (see~\cite{Erdos71}), with the uniqueness result proven by Bollob\'as and Duchet \cite{BD83}, and by Mulder \cite{Mulder83}.

\begin{thr}\label{thr:mainthr_extrset}
    Let $n \ge 3$, and let $\E \subseteq 2^{[n]}$ be a union-efficient family.
    Then $\lvert \E \rvert \le 2^{n-1}+n.$
    Equality is attained if and only if (up to isomorphism) $\E=2^{[n-1]} \cup \binom{[n]}{\ge n-1}$ (i.e., $\E$ contains all subsets of $[n-1]$ and all subsets of $[n]$ with at least $n-1$ elements).
\end{thr}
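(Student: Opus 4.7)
The plan is to identify a \emph{saturated} element $x \in [n]$ such that $\F$ contains every subset of $[n]$ not containing $x$, and then use the union-efficient property to show that the remaining sets (those containing $x$) are few. Once the saturated case is handled, the whole theorem reduces to showing that an extremal $\F$ must be saturated, up to relabeling.

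Assume without loss of generality that $2^{[n-1]} \subseteq \F$, i.e., $n$ is saturated. I claim that every $A \in \F$ with $n \in A$ satisfies $|A| \ge n-1$. Indeed, if $|A| \le n-2$, then $[n-1] \setminus A = \{i_1, \ldots, i_k\}$ with $k \ge 2$, and the subfamily $\{A, \{i_1\}, \ldots, \{i_k\}\} \subseteq \F$ covers $[n]$ (since $n \in A$ and the singletons cover $[n-1] \setminus A$) yet contains no covering pair: $A \cup \{i_j\}$ still omits the other $i_\ell$'s, and two singletons have union of size two. This contradicts union-efficiency. Therefore $\F \cap \{A : n \in A\} \subseteq \{[n]\} \cup \{[n] \setminus \{i\} : i \in [n-1]\}$, a collection of $n$ sets, so $|\F| \le 2^{n-1} + n$. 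Equality forces $\F = 2^{[n-1]} \cup \binom{[n]}{\ge n-1}$, matching the claimed extremal structure.

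The harder part is reducing the general case to the saturated one. My first approach would be a compression argument: show that the shift $\sigma_{i,j}(A) = (A \setminus \{j\}) \cup \{i\}$, applied when admissible, preserves union-efficiency and cardinality; a fully left-compressed maximum union-efficient family can then be shown directly to contain $2^{[n-1]}$. If that fails, I would fall back on induction on $n$ via the partition $\F = \F_0 \cup \F_1$ according to element $n$, where $\F_1' = \{A \setminus \{n\} : A \in \F_1\}$ is readily verified to be union-efficient on $[n-1]$: any covering subfamily in $\F_1'$ lifts to one in $\F$ with $n$ automatically covered, and a covering pair in the lift descends to one in $\F_1'$. The naive combination $|\F_0| + |\F_1'| \le 2^{n-1} + 2^{n-2} + (n-1)$ is too weak, so one must extract joint structural constraints, e.g., that $|\F_0|$ near $2^{n-1}$ forces $\F_0 = 2^{[n-1]}$ outright (reducing to the saturated case), while smaller $|\F_0|$ leaves enough slack to absorb the induction.

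I expect the reduction step to be the main obstacle: the singleton-witness argument makes the saturated case essentially automatic, but proving that extremal families are saturated requires either a delicate verification that compression preserves union-efficiency, or a strengthened inductive hypothesis that tracks how close $\F_0$ is to being all of $2^{[n-1]}$. The uniqueness statement comes essentially for free once the reduction is in place, since the saturated case already pins down the extremal family uniquely.
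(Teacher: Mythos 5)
First, note that the paper does not prove Theorem~\ref{thr:mainthr_extrset} itself: it is imported from the literature (Milner's theorem, with the uniqueness part due to Bollob\'as--Duchet and to Mulder), so there is no internal proof to compare yours against. Judged on its own, your argument for the \emph{saturated} case is correct and clean: if $2^{[n-1]} \subseteq \F$, then for any $A \in \F$ with $n \in A$ and $\lvert A\rvert \le n-2$, the subfamily consisting of $A$ together with the singletons of $[n-1]\setminus A$ covers $[n]$ but admits no covering pair, so every member containing $n$ has size at least $n-1$; this yields the bound $2^{n-1}+n$ and pins down the extremal family. (You should also record the easy verification that $2^{[n-1]} \cup \binom{[n]}{\ge n-1}$ is itself union-efficient, which the ``if'' direction of the equality statement requires.)

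However, the reduction of an arbitrary extremal family to the saturated case --- which you yourself flag as ``the main obstacle'' --- is not actually carried out, and it is precisely where the content of Milner's theorem lies. Neither of your fallback strategies is completed: you assert without proof that the shift $\sigma_{i,j}$ preserves union-efficiency (this is not obvious, since shifting can create new covering subfamilies and destroy covering pairs, and covering-type properties are fragile under compression), and you concede that the naive induction via $\F_0 \cup \F_1'$ only gives $2^{n-1}+2^{n-2}+(n-1)$, which exceeds the target. The phrase ``one must extract joint structural constraints'' is a description of what remains to be done, not an argument. As it stands, the proposal proves the theorem only under the additional hypothesis $2^{[n-1]}\subseteq\F$ (after relabelling), and a maximum union-efficient family is not a priori saturated: adding small sets to a union-efficient family can destroy union-efficiency (for instance, adjoining the singletons to $\{\{1,2\},\{2,3\},\{1,3\}\}$ on $[3]$ creates the covering subfamily $\{\{1\},\{2\},\{3\}\}$ with no covering pair), so saturation cannot be assumed without loss of generality.
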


Theorem~\ref{thr:mainthr_extrset} implies that $\lvert \F \rvert \le 2^{k-1}-2$, thus
$$|V(G)| = |U| + |V(G) \setminus U| \le k + 2^{k-1}-2,$$
and equality holds if and only if (up to renaming) $\F=2^{[k-1]} \setminus \{[k-1], \emptyset \}$. In this case, $G$ consists of the clique induced by $u_1,\ldots,u_{k-1}$ and the $2^{k-1}-1$ vertices in $V(G) \setminus \{u_1,\ldots,u_{k-1}\}$ whose neighbourhoods are precisely all the different nonempty subsets of $\{u_1,\ldots,u_{k-1}\}$ ($u_k$ is the unique vertex adjacent to all of them). This is precisely the characterization of the extremal graph described in Theorem~\ref{thr:imaxG}, completing our proof. \qed

Note that a twin-free graph has at most one isolated vertex, and that adding an isolated vertex does not change the number of maximal independent sets. Thus, if we drop the condition that there are no isolated vertices, we obtain the following version of Theorem~\ref{thr:imaxG}.


\begin{cor}\label{cor:disconn}
    Let $k \ge 2$ be an integer.
     If $G$ is a twin-free graph of order $n$ and $\imax(G) = k$, then $n \le 2^{k-1}+k-1$.
     Furthermore, equality holds only if the graph $G$ is formed by taking a clique $K_{k-1}$ and adding, for every vertex subset $S$ of this clique, a vertex whose neighbourhood is precisely $S$.
\end{cor}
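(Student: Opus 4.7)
The plan is to run the proof of Theorem~\ref{thr:imaxG} essentially verbatim, exploiting the fact that connectedness was invoked in only one place: to rule out the all-one vector in the vertex encoding, equivalently to forbid isolated vertices. The base case $k=2$ is immediate from Proposition~\ref{prop:imax<logn}, which gives $n \le 2^{k}-1 = 3$ for any twin-free graph, and a direct check of the three possible non-zero vectors $(1,0), (0,1), (1,1)$ on three vertices forces the unique extremal graph at $n=3$ to be $K_1 \sqcup K_2$.

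For $k \ge 3$ I would proceed by contradiction, choosing $k$ minimal for which the corollary fails and $G$ a twin-free (not necessarily connected) graph of maximum order with $\imax(G)=k$ as a counterexample. All structural ingredients then carry over without change: Lemma~\ref{lem:def_U} (adding a vertex $u_i$ adjacent to $V(G)\setminus I_i$ preserves twin-freeness and does not require $G$ to be connected), the clique on $U=\{u_1,\dots,u_k\}$, Lemma~\ref{lem:adj_cond}, and the union-efficiency of the family $\F=\{A_v : v \notin U\} \subseteq 2^{[k]}$ with $A_v=\{j : u_j \in N(v)\}$. The exclusions $[k] \notin \F$ (every vertex lies in some maximal independent set) and $[k]\setminus\{j\} \notin \F$ for each $j$ (else $v$ and $u_j$ would be twins) also survive unchanged. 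The single difference is that $\emptyset$ may now belong to $\F$, corresponding to a single isolated vertex, of which there can be at most one by twin-freeness.

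Augmenting $\F$ by the $k+1$ sets $[k]$ and $[k]\setminus\{j\}$ for $j \in [k]$ yields a union-efficient family $\F'$ of size $|\F|+k+1$, so Theorem~\ref{thr:mainthr_extrset} gives $|\F'| \le 2^{k-1}+k$, hence $|\F| \le 2^{k-1}-1$ and $|V(G)| = k + |\F| \le 2^{k-1}+k-1$ — exactly one more than the connected bound, as expected from the one allowed isolated vertex. For the equality case, the uniqueness part of Theorem~\ref{thr:mainthr_extrset} forces $\F' = 2^{[k-1]} \cup \binom{[k]}{\ge k-1}$ up to relabelling, and hence $\F = 2^{[k-1]} \setminus \{[k-1]\}$. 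Unpacking: $u_k$ has no neighbour outside $U$, so $U$ decomposes as the clique $K_{k-1}$ on $\{u_1,\dots,u_{k-1}\}$ together with $u_k$ whose neighbourhood is exactly this $K_{k-1}$; every proper non-empty subset $S \subseteq \{u_1,\dots,u_{k-1}\}$ is realised as the neighbourhood of a unique further vertex; and $\emptyset \in \F$ supplies the isolated vertex. This is precisely the graph in the statement.

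No step represents a genuine obstacle; the only point worth checking explicitly is that allowing $\emptyset \in \F$ does not spoil the union-efficiency of the augmented family (any covering subfamily containing $\emptyset$ still covers $[k]$ after $\emptyset$ is removed, so the original condition applies to the remainder) and does not create any new twins — both are immediate, making the whole argument a transparent parallel of Subsection~\ref{subsec:actualproof}.
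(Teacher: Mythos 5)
Your proposal is correct and follows exactly the route the paper intends: the paper's entire ``proof'' of this corollary is the remark that connectedness in the proof of Theorem~\ref{thr:imaxG} was used only to exclude isolated vertices, i.e.\ to keep $\emptyset$ out of the union-efficient family $\F$, and you have simply spelled out that rerun (the count $|\F|\le 2^{k-1}-1$ via Theorem~\ref{thr:mainthr_extrset} applied to $\F$ augmented by the $k+1$ sets $[k]$ and $[k]\setminus\{j\}$, and the equality case $\F=2^{[k-1]}\setminus\{[k-1]\}$ yielding the one isolated vertex). The details you check, including the $k=2$ base case and the preservation of union-efficiency when $\emptyset\in\F$, all hold.
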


\section{Twin-free bipartite graphs}\label{sec:bipgraphs}

In this section, we show that the minimum value of $\imax(G)$ for twin-free bipartite graphs is linear in the order. We start by proving Theorem~\ref{thm:conn_bipartite}, which we recall for convenience.
\begin{customthm} {\bf \ref{thm:conn_bipartite}}
     Let $G$ be a twin-free bipartite graph of order $n \geq 2$ without isolated vertices.
    Then $\imax(G) \ge \ceilfrac{n}{2}+1$, and this inequality is sharp.
\end{customthm}

\begin{proof}
    For the lower bound, we prove a stronger statement:
    if $G$ is a twin-free bipartite graph without isolated vertices whose bipartition classes $A, B$ have sizes $a \le b$ ($b \le 2^a -1$ is necessary for existence), then $\imax(G) \ge b+1.$
    
    For every vertex $v \in B$, consider the set $I_v = (A \setminus N(v) ) \cup \{ u \in B \mid N(u) \subseteq N(v)\}$.
    This is a maximal independent set by construction.
    Since for every two vertices $u,v \in B$ their neighbourhoods $N(u)$ and $N(v)$ are different, we have $I_u \neq I_v$ if $u \neq v.$
    Finally, $A$ is also a maximal independent set (since $G$ has no isolated vertices) that does not coincide with any of the $I_v$. Thus $G$ has at least $b+1$ maximal independent sets.
    Since $b \ge \ceilfrac n2,$ the lower bound is clear.

    Equality is attained for example by taking a balanced complete bipartite graph $K_{\floorfrac n2, \ceilfrac n2}$ and removing a matching $M$ of size $\ceilfrac n2 - 1$.
    The graph $K_{\floorfrac n2, \ceilfrac n2} \setminus M$ is clearly bipartite, connected and twin-free (note that a bipartite graph is twin-free if and only if its bipartite complement is, provided that neither of the two has two isolated vertices).
    It has two types of maximal independent sets: each of the bipartition classes is a maximal independent set, and for each of the $\ceilfrac n2 - 1$ edges in $M$, the ends form a maximal independent set of cardinality $2$. There are no others: once two vertices from the same bipartition class are contained in an independent set $I$, no vertices from the other class can be included. By maximality, $I$ must be one of the bipartition classes in this case. If an independent set $I$ contains vertices from both classes, then they have to be the ends of an edge in $M$, as they would otherwise be adjacent. Hence $\imax\left(K_{\floorfrac n2, \ceilfrac n2} \setminus M\right) = \ceilfrac n2 + 1$.
\end{proof}

The same bound seems to hold more generally for triangle-free graphs, but we do not have a proof at this point.

\begin{conj}\label{conj:K3free}
    Let $G$ be a twin-free and triangle-free graph of order $n$ without isolated vertices.
    Then $\imax(G) \ge \ceilfrac{n}{2}+1.$
    Furthermore, if $n$ is even, graphs that attain equality are bipartite.
\end{conj}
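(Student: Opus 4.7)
The plan is to generalise the bipartite argument of Theorem~\ref{thm:conn_bipartite}. In that proof, for each $v$ in the larger bipartition class $B$ we constructed the maximal independent set $I_v = (A\setminus N(v))\cup\{u\in B : N(u)\subseteq N(v)\}$; these were distinct because the neighbourhoods in $B$ are pairwise different (twin-freeness), and $A$ supplied one further maximal independent set. For a triangle-free $G$ I would fix an independent set $A$ (e.g.\ a maximum one, or a minimum maximal one) and set $B = V(G)\setminus A$. Triangle-freeness gives that $N(v)$ is independent for every $v$, so $A\setminus N(v)$ is independent for each $v\in B$ and extends to a maximal independent set $I_v$; the natural choice is to add every $u\in B\setminus\{v\}$ whose whole neighbourhood lies in $N(v)$ and then complete greedily. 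Together with $A$ itself this yields $|B|+1$ candidate maximal independent sets.

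Two obstacles appear immediately. First, unlike in the bipartite case, in a triangle-free graph $\alpha(G)$ can greatly exceed $n/2$, so $|B|$ need not reach $\lceil n/2\rceil$ when $A$ is a maximum independent set; one would instead want to take $A$ to be a dominating independent set of size at most $\lfloor n/2\rfloor$, whose existence in a connected twin-free triangle-free graph would need a separate argument. Secondly, and more seriously, the distinctness of the $I_v$'s no longer follows from twin-freeness alone: twin-freeness says $N(u)\neq N(v)$ only as subsets of all of $V(G)$, so the projections $N(u)\cap A$ and $N(v)\cap A$ may still coincide for different $u,v\in B$. Resolving this requires the $B$-component of $I_v$ to distinguish $u$ from $v$, and the edges inside $B$ (which are absent in the bipartite case) make such a canonical choice delicate.

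I expect this second point to be the main obstacle, and the fact that the statement is still a conjecture confirms that no routine adaptation of the bipartite proof succeeds. A promising alternative is an exchange argument focusing on odd cycles: a triangle-free graph that is not bipartite must contain a shortest odd cycle $C$ of length at least $5$, and one could try to show that $C$ together with its attachments already contributes strictly more maximal independent sets than a bipartite skeleton of the same order. This would reduce the bound for connected twin-free triangle-free graphs to the bipartite case, where Theorem~\ref{thm:conn_bipartite} applies, and would simultaneously yield the uniqueness statement for even $n$: equality would force the absence of odd cycles and hence a bipartite graph.
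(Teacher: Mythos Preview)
The statement you are attempting is \emph{Conjecture}~\ref{conj:K3free}: the paper explicitly says ``we do not have a proof at this point'' and offers no argument beyond the bipartite case of Theorem~\ref{thm:conn_bipartite}. There is therefore no proof in the paper to compare your proposal against.

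Your proposal is not a proof either, and you say so yourself. What you have written is an honest diagnosis of why the bipartite argument does not transfer directly, and both obstacles you identify are real. The second one (that twin-freeness only guarantees $N(u)\neq N(v)$ globally, not that $N(u)\cap A\neq N(v)\cap A$ for a chosen independent set $A$) is indeed the crux: without distinct ``$A$-shadows'' the map $v\mapsto I_v$ can collapse, and the greedy completion step you suggest does not canonically separate $I_u$ from $I_v$. The first obstacle is also genuine, though perhaps less severe: the existence of a small dominating independent set in a connected twin-free triangle-free graph is not obvious and would itself need proof.

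Your alternative idea---isolate a shortest odd cycle and argue it forces strictly more maximal independent sets than a bipartite piece of the same size---is a reasonable heuristic, and it would indeed give the ``equality implies bipartite'' clause for even $n$ for free. But as stated it is only a plan: you would need to control how the odd cycle interacts with the rest of the graph (attachments can merge maximal independent sets that look distinct on $C$ alone), and you have not indicated how to do that. In short, nothing here is wrong, but nothing here is a proof; the conjecture remains open both in the paper and in your write-up.
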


If we drop the condition that there are no isolated vertices, the minimum only changes by at most $1$ by the same reasoning that gave us Corollary~\ref{cor:disconn}: a twin-free graph has at most one isolated vertex, and adding an isolated vertex does not change the number of maximal independent sets. The same is true for triangle-free graphs under Conjecture~\ref{conj:K3free}.

\begin{cor}
For every twin-free bipartite graph $G$ of order $n$, we have $\imax(G) \ge \ceilfrac{n-1}{2}+1=\floorfrac{n}{2}+1$, and this inequality is sharp.
\end{cor}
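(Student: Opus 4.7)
The plan is to reduce to the connected (or rather, isolated-vertex-free) case handled by Theorem~\ref{thm:conn_bipartite}. The key preliminary observation is that a twin-free graph can contain at most one isolated vertex, since any two isolated vertices would share the empty neighbourhood and therefore be twins. This immediately suggests a case split based on whether $G$ has an isolated vertex.

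First I would carefully note that the proof of Theorem~\ref{thm:conn_bipartite} actually establishes a stronger statement that does not require connectedness: for any twin-free bipartite graph without isolated vertices whose bipartition classes have sizes $a \le b$, one has $\imax(G) \ge b+1$. The connectedness hypothesis in that theorem is only used implicitly to rule out isolated vertices. With this in hand, if $G$ has no isolated vertex, then writing its bipartition classes as $A, B$ with $|A| \le |B|$ gives $|B| \ge \ceilfrac{n}{2} \ge \floorfrac{n}{2}$, and hence $\imax(G) \ge \floorfrac{n}{2}+1$. If instead $G$ has exactly one isolated vertex $v$, then $G - v$ is a twin-free bipartite graph of order $n-1$ with no isolated vertex (removing $v$ cannot create new isolated vertices), so the previous case yields $\imax(G-v) \ge \ceilfrac{n-1}{2}+1 = \floorfrac{n}{2}+1$. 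Since $v$ lies in every maximal independent set of $G$, there is a bijection between the maximal independent sets of $G$ and those of $G-v$, so $\imax(G) = \imax(G-v)$, and the bound follows.

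For sharpness, I would split on the parity of $n$. When $n$ is even, the construction $K_{n/2,\,n/2} \setminus M$ from the proof of Theorem~\ref{thm:conn_bipartite} already realises $\floorfrac{n}{2}+1$. When $n$ is odd, I would take that construction on $n-1$ vertices and attach a single isolated vertex; this remains twin-free (only one isolated vertex) and bipartite, and the count of maximal independent sets is preserved, again yielding $\floorfrac{n}{2}+1$.

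There is no real obstacle here: the only minor subtlety is making explicit that the key step in the proof of Theorem~\ref{thm:conn_bipartite} uses connectedness solely to avoid isolated vertices, so its conclusion transfers verbatim to the disconnected setting as soon as the (at most one) isolated vertex has been removed.
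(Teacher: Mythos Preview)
Your proposal is correct and follows essentially the same approach as the paper, which also reduces to the isolated-vertex-free case by observing that the proof of Theorem~\ref{thm:conn_bipartite} only uses connectedness to rule out isolated vertices and that a twin-free graph has at most one such vertex. You spell out the case split and the sharpness construction for odd $n$ (adding an isolated vertex) more explicitly than the paper, which simply declares the corollary ``immediate'', but the underlying idea is identical.
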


The graphs in the proof that were constructed to show that the inequality is sharp are not unique---there are many more extremal graphs, see Table~\ref{tab:nrExtBipK3freeG}.

\begin{table}[h]
    \centering
    \begin{tabular}{|c|c|c|c|}
	\hline
	$n$ & $\min \imax(G)$ & \# extremal bip. graphs & \# extremal $K_3$-free graphs \\
	\hline
	4&3&1&1	 \\ 
	5&4&1&1	 \\ 
    6 &4& $2$&2 \\ 	
    7&5 & $4$&5\\
    8&5&4&4\\
    9&6& 16&18\\
    10&6&11& 11\\
    11&7& 73&79\\
    12&7& 33&33\\
	\hline
	\end{tabular}
    \caption{The number of connected twin-free bipartite/triangle-free graphs for which the minimum in Theorem~\ref{thm:conn_bipartite} is attained.}
    \label{tab:nrExtBipK3freeG}
\end{table}  
Let us also present a bijection with certain binary matrices.

\begin{prop}\label{prop:equivmatr}
    The connected twin-free bipartite graphs of order $2k$ for which $\imax(G)=k+1$ are in one-to-one correspondence with $k\times k$-binary matrices satisfying the following conditions:
    \begin{itemize}
        \item there is an all-$1$ row and an all-$1$ column,
        \item all columns are distinct, and all rows are distinct,
        \item the union (bitwise maximum) of any two rows is a row of the matrix itself.
    \end{itemize}
\end{prop}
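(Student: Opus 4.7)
The plan is to realize the correspondence explicitly via the biadjacency matrix and verify the three conditions are equivalent to $\imax(G)=k+1$ (together with the other graph hypotheses). Write $A=\{a_1,\ldots,a_k\}$ and $B=\{b_1,\ldots,b_k\}$ for the bipartition and let $M$ be the $k\times k$ matrix with $M_{ij}=1$ iff $a_i\sim b_j$; the equality $|A|=|B|=k$ is forced by Theorem~\ref{thm:conn_bipartite}, since if the larger class had size $\ge k+1$ we would already obtain $\imax(G)\ge k+2$. Throughout I will use the $A$-side analogues $I_a=(B\setminus N(a))\cup\{u\in A:N(u)\subseteq N(a)\}$ of the maximal independent sets constructed in the proof of Theorem~\ref{thm:conn_bipartite}.

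For the forward direction, twin-freeness on $A$ and on $B$ is equivalent to distinctness of the rows and of the columns of $M$. The sets $B,I_{a_1},\ldots,I_{a_k}$ are $k+1$ pairwise distinct maximal independent sets, as are $A,I_{b_1},\ldots,I_{b_k}$, and since $\imax(G)=k+1$ these two lists describe the same collection. In particular $A=I_{a_\ell}$ for some $\ell$, which unpacks to $N(a_\ell)=B$, an all-$1$ row; symmetrically there is an all-$1$ column. For the union-closure of rows, fix $a_i,a_j$ and consider
\[
I'=\bigl\{u\in A:N(u)\subseteq N(a_i)\cup N(a_j)\bigr\}\cup\bigl(B\setminus(N(a_i)\cup N(a_j))\bigr),
\]
which is a maximal independent set containing $a_i$ and $a_j$ (a routine check). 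It therefore equals some member of our list other than $B$; if $I'=A$ we get $N(a_i)\cup N(a_j)=B=N(a_\ell)$ via the universal row, and otherwise $I'=I_{a_\ell}$ for some $\ell$, in which case equating the $B$-parts gives $N(a_\ell)=N(a_i)\cup N(a_j)$.

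For the reverse direction I would take a matrix $M$ satisfying the three conditions and form the associated bipartite graph $G$; twin-freeness follows from the distinct rows and columns, and connectedness from the universal vertices certified by the all-$1$ row and the all-$1$ column. To determine $\imax(G)$, let $I$ be any maximal independent set with $S=I\cap A$ and $T=I\cap B$. Maximality forces $B\setminus T=\bigcup_{u\in S}N(u)$ and $S=\{u\in A:N(u)\subseteq B\setminus T\}$. If $S=\emptyset$ then $I=B$; otherwise, iterating the union-closure of rows yields $\bigcup_{u\in S}N(u)=N(a_\ell)$ for some $\ell$, and the two displayed equalities collapse to $T=B\setminus N(a_\ell)$ and $S=\{u:N(u)\subseteq N(a_\ell)\}$, so $I=I_{a_\ell}$. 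Hence the maximal independent sets of $G$ are exactly $B,I_{a_1},\ldots,I_{a_k}$, which are pairwise distinct because the rows of $M$ are, and so $\imax(G)=k+1$.

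The main obstacle is the union-closure step in the forward direction: there is no direct reason for $N(a_i)\cup N(a_j)$ to be the neighbourhood of any vertex, so one must recognize the explicit $I'$ above as the maximal independent set whose $B$-part is exactly $B\setminus(N(a_i)\cup N(a_j))$, and then read off the required vertex $a_\ell$ from the $A$-side description $I_{a_\ell}$. Everything else is essentially a translation between the graph and its biadjacency matrix, together with unpacking the two dual descriptions of the maximal independent sets used in Section~\ref{sec:bipgraphs}.
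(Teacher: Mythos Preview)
Your proposal is correct and follows essentially the same approach as the paper: both use the biadjacency matrix, derive balancedness from the lower bound in Theorem~\ref{thm:conn_bipartite}, identify the all-$1$ row/column by recognizing $A$ (resp.\ $B$) among the canonical list of $k+1$ maximal independent sets, and obtain the row-union closure by exhibiting the maximal independent set whose $B$-part is $B\setminus(N(a_i)\cup N(a_j))$ and reading off the corresponding vertex. The reverse direction is likewise the same argument, with your version spelling out the equalities $B\setminus T=\bigcup_{u\in S}N(u)$ and $S=\{u:N(u)\subseteq B\setminus T\}$ a touch more explicitly than the paper does.
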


\begin{proof}
In the proof of Theorem~\ref{thm:conn_bipartite}, it was shown that $\imax(G) \ge \max\{ \abs{A}, \abs{B}\}+1$, where $V(G)=A \cup B$ is the bipartition of $G.$
Hence the equality $\imax(G)=k+1$ can only occur among balanced bipartite graphs, i.e., when $\abs{A}=\abs{B}$. The bipartite graph can now be presented by its reduced adjacency matrix $\M$, which is a $k \times k$-matrix.
From the proof, we also conclude that there is a vertex $v \in B$ for which $I_v=B$ and thus $N(v)=A.$
Similarly, there is a vertex $w \in A$ with $N(w)=B$. Thus $\M$ has a row and a column containing only $1$s.
The graph $G$ being twin-free is equivalent to $\M$ having distinct rows and distinct columns.

If there is a bipartition class, say $B$, of $G$ for which the union of the neighbourhoods of some of its vertices, $B_1 \subseteq B$ ($B_1 \not= \emptyset$), is not equal to the neighbourhood of a vertex in $B$, then $A \setminus N(B_1) \cup \{ u \in B \mid N(u) \subseteq N(B_1) \}$ would be another maximal independent set not counted in the proof of Theorem~\ref{thm:conn_bipartite}.
So for every $B_1 \subseteq B$, there is a $b \in B$ with $N(B_1)=N(b).$
The latter is the case if and only if this is true for every subset $B_1$ of size $2,$ which is equivalent with the union of any $2$ rows being a row itself.

In the reverse direction, assuming $\M$ does satisfy the conditions, we know that it is the adjacency matrix of a connected twin-free bipartite graph $G$.
Assume a maximal independent set of $G$ different from $A$ consists of precisely the vertices in $A_1 \subseteq A$ and $B_1 \subseteq B$. Due to the third condition, there is a vertex $b$ for which $N(B_1)=N(b).$
Then $B_1$ has to be precisely equal to $\{u \in B \mid N(u) \subseteq N(b)\}$, and $A_1=A \backslash N(b)$ since $A_1 \cup B_1$ is a maximal independent set.
Therefore the maximal independent sets are exactly those of the form $\{I_b \mid b \in B\} \cup \{A\}$, and we have $\imax(G)=k+1$.
We conclude that we really have a bijection.
\end{proof}

\begin{remark}
    This also gives a combinatorial proof that among square binary matrices $\M$ with all columns resp. rows distinct and containing an all-$1$ row and an all-$1$ column,
    the condition that the union of any set of columns of $\M$ is a column of $\M$ is equivalent with the condition that the union of any set of rows of $\M$ is a row of $\M$. 
\end{remark}

From this characterization, it can be derived that there are at least exponentially many bipartite graphs $G$ of order $2k$ for which $\imax(G)=k+1$.
For example, $2^{\floorfrac k3}$ different matrices satisfying the constraints in Proposition~\ref{prop:equivmatr} can be obtained by taking a $k \times k$ binary lower-triangular matrix $\M$ all of whose lower-triangular elements are $1$ and then additionally setting some entries $\M_{3i+1,3i+3}$ equal to $1$ for some $0 \le i < \floorfrac k3.$
For $k=6$, this gives four possibile matrices of this form (a red entry can be either $0$ or $1$)
$$ \M= \begin{pmatrix}
1 & 0& \textcolor{red}{0/1} &0&0&0 \\
1&1&0&0&0&0 \\
1&1&1&0&0&0 \\
1&1&1&1&0&\textcolor{red}{0/1} \\
1&1&1&1&1&0 \\
1&1&1&1&1&1 \\
\end{pmatrix}.$$

\section{Twin-free trees}
\label{sec:proofmainthr_trees}

In this section, we consider the problem for trees. Let us start again by recalling the theorem we want to prove. We first define a function $f$ by $f(1)=1, f(2)=f(3)=2$, and for $n \ge 4$
    $$f(n)=\begin{cases}
4\cdot 3^{\frac{n}{5}-1} & \text{ if } n\equiv 0 \pmod 5,\\
5\cdot 3^{\frac{n-6}{5}} & \text{ if } n\equiv 1 \pmod 5,\\ 
2\cdot 3^{\frac{n-2}{5}} & \text{ if } n\equiv 2 \pmod 5,\\ 
8\cdot 3^{\frac{n-8}{5}} & \text{ if } n\equiv 3 \pmod 5,\\
3^{\frac{n+1}{5}}   & \text{ if } n\equiv 4 \pmod 5.
\end{cases}
$$
The main theorem of this section can thus be stated as follows.

\begin{customthm} {\bf \ref{thr:main}}
Let $n \ge 4$ be an integer.
Then for every twin-free tree $T$ with $n$ vertices, we have $\imax(T) \ge f(n)$, and this inequality is sharp.
\end{customthm}

Note that the inequality $\imax(T) \ge f(n)$ is also true for $n \in \{1,2,3\}$, although the statement is void for $n=3$ as there are no twin-free trees with $3$ vertices (however, $\imax(P_3)=2$).

For $ n \le 8$, the values can be verified by determining $\imax(T)$ for all trees of order $n$ (these are the base cases of our induction proof).
So from now on, we consider $n \ge 9$ and assume that the statement has been proven for every smaller order.

In the proof of Theorem~\ref{thr:main}, we will use the following estimates.

\begin{lem}\label{lem:estimates_f}
        The following statements hold:
        \begin{enumerate}[label=(\alph*)]
            \item\label{itm:1}
             except for the pair $(n,m)=(3,3)$, we have $f(n) \cdot f(m) \ge f(n+m)$ for all $n,m \ge 2$, and
            \item\label{itm:2} $f(n-1) \cdot f(m-1) \ge f(n+m-1)$ for all $n,m\ge 5.$
        \end{enumerate}
    \end{lem}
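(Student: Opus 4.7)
The plan is to prove both statements by a routine but finite case analysis, exploiting the fact that for $n \ge 4$ the definition of $f(n)$ has the uniform shape $f(n) = c_r \cdot 3^{(n-a_r)/5}$, where $r = n \bmod 5$ and $(c_r, a_r)$ are small constants read off from the five cases (for instance $(c_0, a_0) = (4, 5)$ and $(c_4, a_4) = (1, -1)$). Consequently, whenever $n, m \ge 4$, the ratio $f(n)\,f(m)/f(n+m)$ depends only on the pair of residues $(n \bmod 5, m \bmod 5)$ and equals $\frac{c_r c_{r'}}{c_s} \cdot 3^{(a_s - a_r - a_{r'})/5}$, where $s \equiv r+r' \pmod 5$.

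For part \ref{itm:1}, I would first treat the \emph{large case} $n, m \ge 4$: there are only $15$ unordered pairs $(r, r')$ of residues, and for each of them I would compute the ratio above and check that it is at least $1$ (in fact one sees that it is always at least $9/8$, with the minimum attained at $(r, r') = (4, 4)$). This immediately yields the inequality $f(n)f(m) \ge f(n+m)$ throughout this range. The \emph{small case}, in which $n \in \{2, 3\}$ or $m \in \{2, 3\}$, is handled separately. For the four pairs with $n, m \in \{2, 3\}$, direct inspection using $f(2) = f(3) = 2$, $f(4) = 3$, $f(5) = 4$, $f(6) = 5$ shows that the inequality holds except for $(n, m) = (3, 3)$, where $f(3)f(3) = 4 < 5 = f(6)$. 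For the mixed case $n \in \{2, 3\}$, $m \ge 4$, the same residue-based computation applies: I would check the $10$ sub-cases indexed by $n \in \{2, 3\}$ and $m \bmod 5 \in \{0, 1, 2, 3, 4\}$, and in each case the ratio $n \cdot f(m)/f(n+m)$ is a concrete small number that is at least $1$ (the tightest values being $1$ in several $n=3$ subcases).

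For part \ref{itm:2}, I would substitute $n' = n-1$ and $m' = m-1$. The hypothesis $n, m \ge 5$ gives $n', m' \ge 4$, and $n' + m' = n+m-2$, so the claim $f(n-1)f(m-1) \ge f(n+m-1)$ becomes $f(n')f(m') \ge f(n'+m')$ with both arguments at least~$4$. This is precisely the large case of part \ref{itm:1}, already verified. The exceptional pair $(3, 3)$ of part \ref{itm:1} never intervenes, because $n', m' \ge 4$.

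The only potential obstacle is the bookkeeping of the case analysis, but the arithmetic is entirely routine: each ratio is a product of small rationals and a fractional power of $3$, and in the cleanest write-up one would summarize the computations for the $15$ residue pairs in a single small table and dispose of the small-value cases $n \in \{1, 2, 3\}$ by direct substitution of the values $f(1), \ldots, f(8)$.
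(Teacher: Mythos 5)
Your treatment of part \ref{itm:1} is sound and amounts to the same finite verification as the paper's (the paper organizes it differently, deriving the case $n,m\ge 4$ from part \ref{itm:2} plus monotonicity of $f$, but your direct residue-pair table and the small-case checks cover all of $n,m\ge 2$ correctly, and your numerical claims --- minimum ratio $9/8$ at residues $(4,4)$, ratio exactly $1$ in several $n=3$ subcases --- are accurate).

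Part \ref{itm:2}, however, contains an off-by-one error that leaves a genuine gap. With $n'=n-1$ and $m'=m-1$ you have $n'+m'=n+m-2$, so the claim $f(n-1)f(m-1)\ge f(n+m-1)$ rewrites as $f(n')f(m')\ge f(n'+m'+1)$, \emph{not} as $f(n')f(m')\ge f(n'+m')$. Since $f$ is increasing for $n\ge 4$, the latter is strictly weaker, so the large case of part \ref{itm:1} does not imply part \ref{itm:2}. The difference matters: for $n=m=5$ part \ref{itm:2} asserts $f(4)^2=9\ge f(9)=9$, which is tight, whereas your large-case computation only yields $f(4)^2\ge f(8)=8$ with slack $9/8$ --- and that slack is exactly eaten by the step from $f(8)$ to $f(9)$. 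The statement of part \ref{itm:2} is still true, but it needs its own finite check of $f(n')f(m')/f(n'+m'+1)\ge 1$ over the residue pairs; this is precisely what the paper does by introducing $g(n)=f(n-1)3^{-n/5}$ (which depends only on $n\bmod 5$) and verifying $g(i)g(j)\ge g(i+j)$ in all cases. As written, your argument does not establish \ref{itm:2}.
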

    \begin{proof}
        Note that for $n \ge 5$, the function $g(n)=f(n-1)3^{-\frac{n}{5}}\ge 1$ only depends on the residue class modulo $5$, namely
        $$g(n)=\begin{cases}
        1   & \text{ if } n\equiv 0 \pmod 5,\\
4\cdot 3^{-\frac{6}{5}} & \text{ if } n\equiv 1 \pmod 5,\\
5\cdot 3^{-\frac{7}{5}} & \text{ if } n\equiv 2 \pmod 5,\\ 
2\cdot 3^{-\frac{3}{5}} & \text{ if } n\equiv 3 \pmod 5,\\ 
8\cdot 3^{-\frac{9}{5}} & \text{ if } n\equiv 4 \pmod 5.\\
\end{cases}$$
A direct verification shows that
$$\frac{g(i)g(j)}{g(i+j)} \in \Big\{1,\frac{25}{24},\frac{16}{15}, \frac{10}{9}, \frac{32}{27}\Big\},$$
thus in particular $g(i)g(j) \ge g(i+j)$ in all cases. So it follows that
$$f(n-1)f(m-1) = 3^{(n+m)/5} g(n)g(m) \ge 3^{(n+m)/5} g(n+m) = f(n+m-1)$$
for all $n,m \ge 5$. Thus statement (b) holds.
Since $f(n)$ is increasing for $n \ge 4$, we have
$$f(n)f(m) \ge f(n+m+1) > f(n+m)$$
for all $n,m \ge 4$. Thus we only need to verify (a) in those cases where either $n$ or $m$ (without loss of generality $n$) is equal to $2$ or $3$. If both $n$ and $m$ are equal to $2$ or $3$, then the inequality holds unless $n = m = 3$, as $f(6) = 5 > f(2)^2 = f(2)f(3) = f(3)^2 = 4 = f(5) > f(4) = 3$. 

Moreover, since $f(2) = f(3) = 2$, it actually suffices to consider $n = 3$ (again because $f$ is an increasing function: $f(3)f(m) \ge f(m+3)$ implies $f(2)f(m) = f(3)f(m) \ge f(m+2)$). The inequality $f(3)f(m) = 2f(m) \ge f(m+3)$ is easily verified for $m \ge 4$ by checking all five possible cases modulo $5$ and finding that
$$\frac{f(3)f(m)}{f(3+m)} \in \Big\{1,\frac{16}{15}, \frac{10}{9}\Big\}.$$
\end{proof}

Next, we explain the inductive idea used by Wilf~\cite{Wilf86}.
    Let $x$ be a leaf of $T$ and $y$ its neighbour.
    Let $u_1, u_2, \ldots, u_r$ be the neighbours of $y$ different from $x$.
    Let $U_i$ be the component (subtree) of $T \setminus \{x,y\}$ that contains $u_i$, and let $W_{i,j}$, $1 \le j \le s_i$, be the components of $U_i \setminus u_i$.
    Finally, let $w_{i,j}$ be the neighbour of $u_i$ belonging to $W_{i,j}.$
    This is illustrated in Figure~\ref{fig:tree_with_UW}.
    
\begin{figure}[h]
    \begin{center}
    \begin{tikzpicture}
    {
    
    \draw[thick] (-4,4)--(1,5) -- (1,6);
    \draw[fill] (1,5) circle (0.1);
    \draw[fill] (1,6) circle (0.1);
    \draw[fill] (-4,4) circle (0.1);

    \foreach \x in {-4,0.5,6}
    {
    \draw[fill] (\x-1,3) circle (0.1);
    \draw[fill] (\x+1,3) circle (0.1);
    \draw[] (\x-1,2) ellipse (0.75cm and 1cm);
    \draw[] (\x+1,2) ellipse (0.75cm and 1cm);
    \draw[thick] (\x,4) -- (1,5);
    \draw[fill] (\x,4) circle (0.1);
    \draw[dashed] (\x,2.22) ellipse (1.84cm and 2.35cm);
    \draw[thick] (\x+1,3)-- (\x,4) -- (\x-1,3);
    \node at (\x,2) {$\ldots$};
    }
    
    \node at (3.25,2.22) {\Large $\cdots$};
    
    \node at (1,6.3) {$x$};
    \node at (1.25,5.3) {$y$};
    \node at (-4,4.3) {$u_1$};
    \node at (0.3,4.3) {$u_2$};
    \node at (6,4.3) {$u_r$};
    \node at (-5.15,3.3) {$w_{1,1}$};
    \node at (-2.85,3.3) {$w_{1,s_1}$};
    \node at (-0.65,3.3) {$w_{2,1}$};
    \node at (1.65,3.3) {$w_{2,s_2}$};
    \node at (7.15,3.3) {$w_{r,s_r}$};
    \node at (4.85,3.3) {$w_{r,1}$};
    \node at (-5,2) {$W_{1,1}$};
    \node at (-3,2) {$W_{1,s_1}$};
    \node at (-0.5,2) {$W_{2,1}$};
    \node at (1.5,2) {$W_{2,s_2}$};
    \node at (7,2) {$W_{r,s_r}$};
    \node at (5,2) {$W_{r,1}$};
    \node at (-4,0.5) {$U_{1}$};
    \node at (0.5,0.5) {$U_{2}$};
    \node at (6,0.5) {$U_{r}$};
    }
	\end{tikzpicture}\\
\end{center}
\caption{Tree with subtrees.}\label{fig:tree_with_UW}
\end{figure}
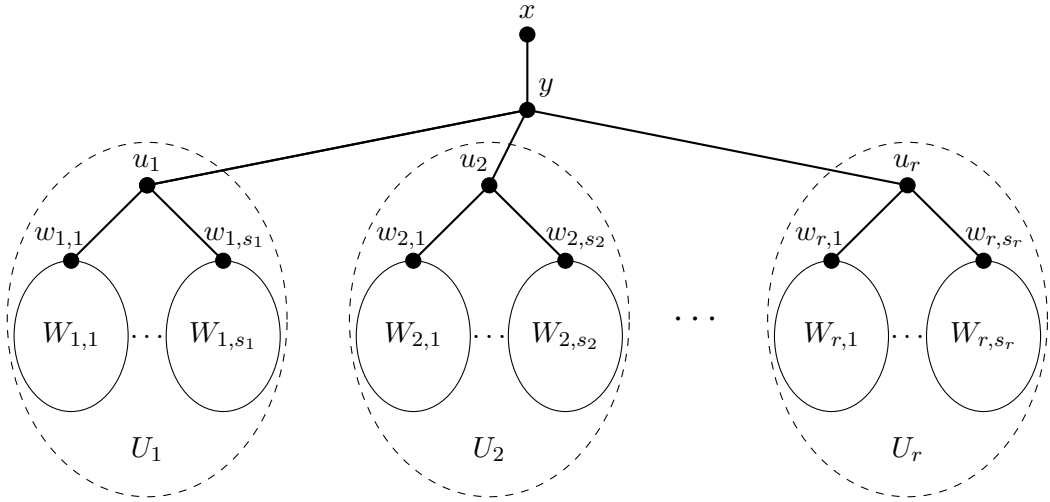

Then the following formula holds, see~\cite[Lemma~1]{Wilf86}.
    \begin{lem}[\hspace{1sp}\cite{Wilf86}]\label{lem:formula_Wilf}
        If $T$ is a tree decomposed as in Figure~\ref{fig:tree_with_UW}, then
        $$ \imax(T)= \prod_{i=1}^r \imax (U_i) + \prod_{i=1}^r \prod_{j=1}^{s_i} \imax(W_{i,j}).$$
    \end{lem}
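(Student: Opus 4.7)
The plan is to partition the maximal independent sets of $T$ according to whether they contain the leaf $x$ and bijectively match each class with tuples of maximal independent sets of the smaller subtrees displayed in Figure~\ref{fig:tree_with_UW}. Since $x$ is a leaf with unique neighbour $y$, every maximal independent set $I$ of $T$ falls into exactly one of two types: either $x \in I$ and $y \notin I$, or $x \notin I$ and $y \in I$. Indeed, $x$ and $y$ cannot both lie in $I$ since $xy$ is an edge, and if neither did, then $I \cup \{x\}$ would still be independent, violating maximality.

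For the first type, the only edge from $V(U_i)$ to the rest of $T$ is $u_i y$, and $y \notin I$. Hence $I \cap V(U_i)$ is independent in $U_i$, and maximality forces it to be maximal in $U_i$ (a vertex of $U_i$ outside $I$ must be dominated, and only neighbours inside $U_i$ remain available). Conversely, given any choice of maximal independent sets $I_i \subseteq V(U_i)$, the set $\{x\} \cup \bigcup_i I_i$ is a maximal independent set of $T$: independence follows because $y$ is excluded and different $U_i$'s are pairwise non-adjacent, while maximality follows because $x$ dominates $y$. This yields a bijection with $\prod_{i=1}^{r} \imax(U_i)$ sets.

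For the second type, $y \in I$ forces every $u_i$ out of $I$. The same reasoning applied one level deeper gives: the only edge leaving $V(W_{i,j})$ is $w_{i,j} u_i$, and $u_i \notin I$, so $I \cap V(W_{i,j})$ must be maximal in $W_{i,j}$. Conversely, for arbitrary maximal independent sets $I_{i,j} \subseteq V(W_{i,j})$, the set $\{y\} \cup \bigcup_{i,j} I_{i,j}$ is maximal in $T$ because $x$ is dominated by $y$ and each $u_i$ is also dominated by $y$. This contributes $\prod_{i=1}^{r} \prod_{j=1}^{s_i} \imax(W_{i,j})$ sets, and summing the two contributions gives the claimed identity.

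I expect no real obstacle beyond a boundary check: if some $u_i$ happens to be a leaf of $T$, then $s_i=0$ and $U_i=\{u_i\}$, in which case $\imax(U_i)=1$ and the empty inner product $\prod_{j=1}^{0}\imax(W_{i,j})=1$ both correctly correspond to the forced choices $I \cap V(U_i)=\{u_i\}$ in Type~1 and $I \cap V(U_i)=\emptyset$ in Type~2, so the formula remains valid. No use is made of twin-freeness, which is consistent with the lemma being quoted from Wilf's paper in its original generality.
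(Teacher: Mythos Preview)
Your argument is correct. The paper itself does not prove this lemma: it simply states the result and refers the reader to \cite[Lemma~1]{Wilf86}. The bijective argument you give---splitting the maximal independent sets of $T$ according to whether the leaf $x$ or its neighbour $y$ is present, and then reading off tuples of maximal independent sets in the $U_i$ (respectively the $W_{i,j}$)---is exactly the standard proof, and your handling of the degenerate case $s_i=0$ is also correct.
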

With this formula, it is immediate to prove that the five constructions in Figure~\ref{fig:construction_mod5} give equality in Theorem~\ref{thr:main}. 

    Let us now continue with our induction proof. Assume that $T$ is a twin-free tree of order $n \ge 9$. In the decomposition of Figure~\ref{fig:tree_with_UW}, assume without loss of generality that $x$ is a leaf that is an end of a diametral path. This implies that all but one of $y$'s neighbours are leaves, and since we are only considering twin-free trees, it actually implies that $y$ has degree $2$, i.e., $r = 1$. Writing $s = s_1$, $U = U_1$, $W_{j} = W_{1,j}$, $u = u_1$ and $w_j = w_{1,j}$ for simplicity, the formula in Lemma~\ref{lem:formula_Wilf} is reduced to
 \begin{equation}\label{eq:wilf_reduced}
\imax(T)= \imax (U) + \prod_{j=1}^{s} \imax(W_{j}).
\end{equation}

Note that $\lvert U \rvert = n-2$. The core of our induction lies in the following lemma.

    \begin{lem}\label{lem:U&W_estimates}
      At least one of the following holds:
        \begin{enumerate}
\item\label{item:ineq_1} $\imax(U)\ge f(n-3)$ and $\displaystyle \prod_{j=1}^{s} \imax(W_{j}) \ge f(n-3)$, or
\item\label{item:ineq_2} $\imax(U)\ge f(n-2)$ and $\displaystyle \prod_{j=1}^{s} \imax(W_{j}) \ge f(n-4)$, or
\item\label{item:ineq_3} $\imax (U) \ge 2f(n-5)$ and $\displaystyle\prod_{j=1}^{s} \imax(W_{j}) \ge f(n-5)$.
        \end{enumerate}
    \end{lem}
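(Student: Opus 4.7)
The plan is to combine two ingredients: an exact formula for $\imax(U_i)$ obtained from the decomposition of $U_i$ at its vertex $u_i$ (analogous to Lemma~\ref{lem:formula_Wilf}), and the inductive hypothesis of Theorem~\ref{thr:main} applied to appropriate twin-free subtrees.

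First I would derive the identity
\[
\imax(U_i) \;=\; \prod_{j=1}^{s_i} c_j \;+\; \prod_{j=1}^{s_i} \imax(W_{i,j}) \;-\; \prod_{j=1}^{s_i} \beta_j,
\]
by splitting the maximal independent sets of $U_i$ according to whether $u_i$ is included; here $c_j := \imax(W_{i,j} - w_{i,j})$ and $\beta_j$ is the number of maximal independent sets of $W_{i,j}$ not containing $w_{i,j}$. Every such set restricts to a maximal independent set of $W_{i,j} - w_{i,j}$, so $c_j \geq \beta_j$, giving the fundamental inequality $\imax(U_i) \geq \prod_j \imax(W_{i,j})$.

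Next I would analyse the twin structure. Because $T$ is twin-free, every twin pair inside a $W_{i,j}$ must involve $w_{i,j}$ (else the two vertices would remain twins in $T$), and the tree structure then forces $w_{i,j}$ to be a leaf of $W_{i,j}$ whose unique neighbour is shared with another leaf. Consequently the twin-free core of $W_{i,j}$ has either $n_j := |W_{i,j}|$ or $n_j - 1$ vertices, and the inductive hypothesis applied to this core (which has strictly fewer than $n$ vertices) yields $\imax(W_{i,j}) \geq f(\tilde n_j)$ with $\tilde n_j \in \{n_j, n_j - 1\}$. Writing $k$ for the number of ``bad'' indices (those with $\tilde n_j = n_j - 1$), iterated application of Lemma~\ref{lem:estimates_f}\ref{itm:1} gives
\[
\prod_j \imax(W_{i,j}) \;\geq\; \prod_j f(\tilde n_j) \;\geq\; f(m - 1 - k),
\]
apart from the exceptional pair $(3,3)$, which I would dispose of by direct inspection (the only tree on three vertices is $P_3$, leaving very few configurations to check).

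The three conclusions of the lemma then correspond to the size of $k$. When $k = 0$, both $\imax(U_i)$ and the product are at least $f(m-1)$, giving case~(1). When $k \geq 1$ the product drops to $f(m-1-k)$, but I would compensate by bounding the first summand $\prod_j c_j$ from below: each bad subtree contributes a factor $c_j \geq 2$ (since $W_{i,j} - w_{i,j}$ still contains the ``twin leaf'' $v_j$ and its neighbour $p_j$, so $\imax(W_{i,j} - w_{i,j}) \geq \imax(P_2) = 2$), and the remaining $c_j$'s are controlled by the same twin-free core / induction argument applied to $W_{i,j} - w_{i,j}$. Combining via Lemma~\ref{lem:estimates_f} then yields $\imax(U_i) \geq f(m)$ when $k = 1$ (case~(2)) and $\imax(U_i) \geq 2 f(m-3)$ when $k \geq 2$ (case~(3)). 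I expect the main obstacle to be the bookkeeping around the exceptional pair $(3,3)$ of Lemma~\ref{lem:estimates_f}\ref{itm:1}, together with the subtler fact that $W_{i,j} - w_{i,j}$ may develop a new twin pair involving $p_j$ that is not present in $W_{i,j}$; these losses must be absorbed by $\prod_j c_j$ exactly enough to hit the thresholds $f(m)$ and $2 f(m-3)$, and the boundary case $s_i = 1$ (where the formula collapses to $\imax(U_i) = c_1 + \alpha_1$ with $\alpha_1$ the number of MIS of $W_{i,1}$ containing $w_{i,1}$) needs to be treated separately.
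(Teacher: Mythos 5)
Your decomposition identity for $\imax(U_i)$ is correct, and several ingredients match the paper's proof: twins inside a $W_{i,j}$ must involve $w_{i,j}$, which is then a leaf of $W_{i,j}$ of degree $2$ in $U_i$; the twin-free core loses at most one vertex; and the induction hypothesis plus Lemma~\ref{lem:estimates_f} control the product. But the trichotomy by the number $k$ of non-twin-free branches does not map onto the three conclusions, and two essential mechanisms are missing. First, you ignore the branch $\lvert W_{i,j}\rvert=1$ (a single pendant leaf at $u_i$, which twin-freeness of $T$ permits exactly once). Since $f(1)f(s)=f(s)<f(s+1)$, a singleton already costs one unit in the subscript, so your bound $\prod_j\imax(W_{i,j})\ge f(m-1-k)$ fails: with $k=0$, a pendant leaf plus one twin-free branch of order $m-2$ gives a product that can equal $f(m-2)<f(m-1)$, so conclusion~(1) is unavailable and one must retreat to conclusion~(2) --- whose first coordinate $f(m)$ you cannot reach by your route (see below). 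The singleton case is precisely the one that forces conclusion~(3), and it is absent from your case split. Second, for $k\ge 2$ without a singleton your bound $f(m-1-k)\le f(m-3)$ is too weak: conclusion~(2) needs the product to stay at $f(m-2)$, and conclusion~(3) cannot absorb the loss because $2f(m-3)>f(m)$ when $m\equiv 1,4\pmod 5$, so $\imax(U_i)\ge 2f(m-3)$ is simply not guaranteed there. The fix is part~\ref{itm:2} of Lemma~\ref{lem:estimates_f}, which you never invoke: iterating $f(a-1)f(b-1)\ge f(a+b-1)$ over the non-twin-free branches (all of order $\ge 5$) makes the \emph{total} loss one unit, independent of $k$, keeping the product at $f(m-2)$.

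The other gap concerns the first coordinates. For conclusion~(2), the bound $\imax(U_i)\ge f(m)$ cannot be assembled from $\prod_j c_j$ and per-branch estimates: already for $s_i=1$ with $W_{i,1}$ non-twin-free, your identity gives $\imax(U_i)=\imax(W_{i,1})+\#\{\text{MIS of }W_{i,1}\text{ containing the twin leaves}\}$, where the second term can equal $1$ while the first is only guaranteed to be $f(m-2)$; the inequality $\imax(U_i)\ge f(m)$ comes instead from applying the induction hypothesis of Theorem~\ref{thr:main} to $U_i$ itself, which is a twin-free tree of order $m<n$ in all relevant cases (this is the paper's device, and it is the only one that works). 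For conclusion~(3), the factor $2$ does not come from ``each bad subtree contributes $c_j\ge 2$'': in the residual case (a singleton branch, all other branches non-twin-free) one has $c_j=\imax(W_{i,j}-w_{i,j})=\imax(W_{i,j})$ \emph{exactly} for every $j\ge 2$ and $\prod_j\beta_j=0$ because the singleton's unique MIS contains its vertex, so your identity collapses to $\imax(U_i)=2\prod_{j\ge 2}\imax(W_{i,j})\ge 2f(m-3)$. Without isolating that configuration, the thresholds $f(m)$ and $2f(m-3)$ are not reached by the proposed bookkeeping.
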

    \begin{proof}
    We divide the proof into two cases, depending on the root degree of $U$.
    
        \textbf{Case 1: $s=1$.}\\
        Observe that if $U$ is not twin-free
        (in which case $u$ and a leaf of $U$ are at distance $2$ from each other), then $W_{1}$ is twin-free and by the induction hypothesis $\imax(U)= \imax(W_{1})\ge f(n-3)$, so the first of the three statements holds. 
        If $U$ is twin-free, $W_{1}$ might not be, but can in this case be made twin-free by removing a vertex without affecting $\imax$. Hence we have
        $\imax(U)\ge f(n-2)$ and $\imax(W_{1})\ge f(n-4)$, so the second statement holds.
        
        \textbf{Case 2: $s\ge 2$.}\\
        First we observe that $U$ is now always a twin-free tree and thus $\imax(U)\ge f(n-2)$ by the induction hypothesis.
        We also observe that $ \imax(W_j) \ge f( \lvert W_j \rvert )$ if $W_j$ is twin-free and otherwise $ \imax(W_j) \ge f( \lvert W_j \rvert -1)$ by the same argument as in Case 1; only the vertex $w_j$ can become a twin of another vertex, and in that case $W \setminus w_j$ is twin-free.

        If $\lvert W_j \rvert \le 4$, then we have $\imax(W_j) = f(\lvert W_j \rvert)$: in each case, there is only one possible tree up to isomorphism ($W_j$ cannot be a star of order $4$, since $T$ would then not be twin-free). Hence $ \imax(W_j) < f( \lvert W_j \rvert)$ can only happen if $\lvert W_j \rvert \ge 5$.

        We consider two subcases now: assume first that $\lvert W_j \rvert > 1$ for all $j$. We have $\imax(W_j) \ge f(\lvert W_j \rvert - 1)$ whenever $\lvert W_j \rvert \ge 5$, and iterating part (b) of Lemma~\ref{lem:estimates_f} gives us
        $$\prod_{j: \lvert W_j \rvert \ge 5} \imax(W_j) \ge f \left( \left(\sum_{j: \lvert W_j \rvert \ge 5} \lvert W_j \rvert \right) - 1 \right),$$
        provided there are $W_j$ of order $5$ or greater.
        For all $j$ with $W_j < 5$, we can use the fact that $\imax(W_j) = f(\lvert W_j \rvert)$. Applying part (a) of the same lemma repeatedly now, we end up with
        \begin{equation}\label{eq:prod_iteration}
            \prod_{j=1}^{s} \imax(W_j) \ge f \left( \left(  \sum_{j=1}^{s} \lvert W_j \rvert \right) - 1 \right) = f(n-4),
        \end{equation}
        so statement \ref{item:ineq_2} holds in this case. If there are no $W_j$ of order $5$ or greater, we can skip the first step and only apply part (a) of Lemma~\ref{lem:estimates_f}. The exceptional case $(3,3)$ only occurs at most once in the process, and we have $f(3)^2 = 4 = f(5)$, thus~\eqref{eq:prod_iteration} still applies. In either case, we are done.
        
        Let us finally consider the case that there is a $j$ (without loss of generality $j=1$) such that $\lvert W_j \rvert = 1$. There can only be one, as $T$ is assumed to be twin-free. The same argument as in the other subcase now yields
        $$\prod_{j=1}^{s} \imax(W_j) = \prod_{j=2}^{s} \imax(W_j) \ge f \left( \sum_{j=2}^{s} \lvert W_j \rvert- 1 \right) = f(n-5).$$
        Moreover, this bound can be improved if there is a twin-free $W_j$ of order at least 4: in this case, we have $\imax(W_j) \ge f(\lvert W_j \rvert)$. Iterating part (b) of Lemma~\ref{lem:estimates_f} then yields
        $$\prod_{j: \lvert W_j \rvert \ge 4} \imax(W_j) \ge f \left( \sum_{j: \lvert W_j \rvert \ge 4} \lvert W_j \rvert \right)$$
        and thus~\eqref{eq:prod_iteration} again, in which case we are done. Likewise, since $f(2) = f(3) = 2$, we can improve the bound to $f(n-4)$ if there is a $W_j$ of order $2$.

        So if~\eqref{eq:prod_iteration} does not hold, then $W_j$ cannot be twin-free for any $j \ge 2$ (there are no twin-free trees of order $3$, and all other orders have been ruled out). This is only possible if the degree of $w_{j}$ is $2$. Moreover, $\imax(W_j \setminus w_{j}) = \imax(W_j)$ in this case.
        Taking $w_{1}$ (which by our assumption is a leaf) as the new root, we can apply Lemma~\ref{lem:formula_Wilf} to $U$ and deduce that $$\imax(U) = \prod_{j=2}^{s} \imax(W_j) + \prod_{j=2}^{s} \imax(W_j\setminus w_{j})= 2 \prod_{j=2}^{s} \imax(W_j) \ge 2f(n-5).$$
        Thus statement~\ref{item:ineq_3} applies in this case, which completes the proof.
    \end{proof}

Applying the inequalities in Lemma~\ref{lem:U&W_estimates} to the formula in~\eqref{eq:wilf_reduced}, we obtain
$$\imax(T) \geq \min \big(2f(n-3),f(n-2)+f(n-4),3f(n-5) \big).$$
It is however straightforward to verify from the definition of $f(n)$ that $f(n) \le 2f(n-3)$ (with equality if and only if $n \equiv 0,2,3 \pmod 5$), $f(n) \le f(n-2) + f(n-4)$ (with equality if and only if $n \equiv 1,3 \pmod 5$) and $f(n) = 3f(n-5)$ for all $n \ge 9$, so the inequality $\imax(T) \geq f(n)$ follows immediately as a consequence, which finally completes the proof of Theorem~\ref{thr:main}. \qed

Theorem~\ref{thr:main} also easily extends to forests, as the following corollary shows.

\begin{cor}
For every twin-free forest $F$ of order $n \ge 2$, we have $\imax(F) \ge f(n-1)$.
\end{cor}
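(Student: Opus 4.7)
The plan is to decompose the forest into its connected components and reduce to the tree case that has already been handled. First I would observe that every component of a twin-free forest is itself twin-free, since any pair of twins inside one component remains a pair of twins in the whole forest. Moreover, $F$ contains at most one isolated vertex, because two isolated vertices would share the empty open neighbourhood and thus be twins. Since an isolated vertex belongs to every maximal independent set of $F$, removing such a vertex leaves $\imax$ unchanged; so, at the cost of reducing the order by at most one, I may assume $F$ has no isolated vertex.

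Write the remaining forest as $F = T_1 \sqcup \cdots \sqcup T_k$, where each $T_i$ is a twin-free tree of order $m_i \ge 2$. By Theorem~\ref{thr:main} together with the base case $\imax(P_2) = 2 = f(2)$, one has $\imax(T_i) \ge f(m_i)$ for every $i$, and therefore
$$\imax(F) = \prod_{i=1}^k \imax(T_i) \ge \prod_{i=1}^k f(m_i).$$
Iterating Lemma~\ref{lem:estimates_f}\ref{itm:1} then yields $\prod_{i=1}^k f(m_i) \ge f\bigl(\sum_{i=1}^k m_i\bigr)$, and since $f$ is non-decreasing on the positive integers, the right-hand side is at least $f(n-1)$.

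The only subtle point is that the exceptional pair $(n,m)=(3,3)$ excluded in Lemma~\ref{lem:estimates_f}\ref{itm:1} never arises during the iteration. This is automatic under the twin-free hypothesis: the only tree on three vertices is $P_3$, whose two leaves are twins, so no $T_i$ can have order three. This is the single place where the twin-free assumption enters beyond the component-wise application of Theorem~\ref{thr:main}, and I do not anticipate any further obstacle.
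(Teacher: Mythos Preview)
Your proof is correct and follows essentially the same approach as the paper: both reduce to the tree case via Theorem~\ref{thr:main} and then combine components using Lemma~\ref{lem:estimates_f}\ref{itm:1}, with the observation that no component has order~$3$ handling the excluded pair. The only organisational difference is that the paper peels off one component at a time via induction on~$n$, whereas you decompose fully and iterate the lemma in one sweep; the content is the same.
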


\begin{proof}
We use induction on $n$. For $n \le 4$, the inequality is easy to check, so let us consider $n \ge 5$. Suppose that $F$ is disconnected and has a connected component $T$ of order $m$ with $n-1>m \ge 2$. Taking the smallest such component, we can assume that $m \le \frac{n}{2}$ and thus $n - m \ge \ceilfrac{n}{2} \ge 3$. By Theorem~\ref{thr:main} and the induction hypothesis, we have
$$\imax(F) = \imax(T) \cdot \imax(F - T) \ge f(m) \cdot f(n-m-1) \ge f(n-1),$$
where the final inequality follows from part (a) of Lemma~\ref{lem:estimates_f}. Note here that $m \neq 3$, since there are no twin-free trees of order $3$. The only remaining possibilities are that $F$ is connected (i.e., a tree), or that $F$ consists of a tree and one isolated vertex. In either case, the conclusion follows from Theorem~\ref{thr:main}.
\end{proof}

\subsection{The extremal trees}\label{subsec:extrtrees}

The extremal trees can be constructed iteratively by tracking the cases of equality in the proof. A complete characterization is provided in~\cite{TM20}. Here we add a small correction for $n=8$: there are three extremal trees (rather than two as claimed in~\cite{TM20}), see Figure~\ref{fig:extrTn8}.

Due to the different constructions, it is not too surprising that the number of extremal trees is not monotone as a function of $n$. The number of extremal trees for $4 \le n \le 19$ is summarized in Table~\ref{tab:NrextrTrees}.

    \begin{table}[h]
        \centering
            \begin{tabular}{|c|c|}
	\hline
	$n$ & \# Extremal trees \\
	\hline
	$\{4,5,7,9\}$ & $1$	 \\ $\{6,14\}$ & $2$	 \\
	$\{8,10,11,12\}$ & $3$	 \\ 
	13& $11$	 \\ 
	$\{15,16\}$& $12$	 \\ 	17& $10$	 \\ 
    18& $60$\\
    19& $5$\\
	\hline
	\end{tabular}
        \caption{Number of extremal trees (i.e., trees satisfying $\imax(T)=f(n)$) of order $n$.}
        \label{tab:NrextrTrees}
    \end{table}  
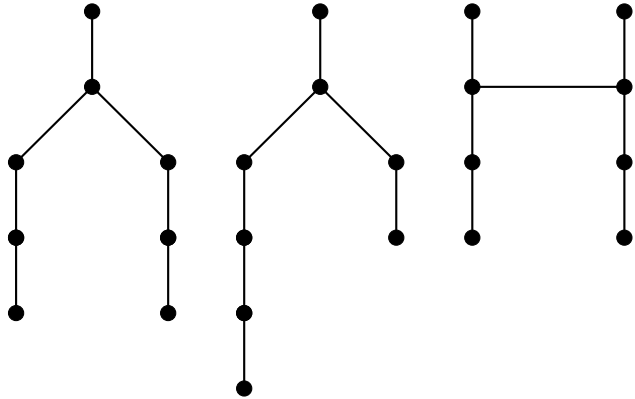
\begin{figure}[h]
\begin{center}
    \begin{tikzpicture}

    \draw[thick] (1,5) -- (1,6);
    \draw[fill] (1,5) circle (0.1);
    \draw[fill] (1,6) circle (0.1);
    \draw[thick] (1,5) -- (0,4);
    \draw[thick] (1,5) -- (2,4);
    
    \foreach \x in {2,3}
    \foreach \y in {0,2}
    {
    {
    \draw[thick] (\y,\x) -- (\y,\x+1);
    \draw[fill] (\y,\x) circle (0.1);
    \draw[fill] (\y,\x+1) circle (0.1);
    }
    }

     \draw[thick] (4,5) -- (4,6);
    \draw[fill] (4,5) circle (0.1);
    \draw[fill] (4,6) circle (0.1);
    \draw[thick] (4,5) -- (3,4);
    \draw[thick] (4,5) -- (5,4);
    
    \foreach \x in {1,2,3}
    {
    \draw[thick] (3,\x) -- (3,\x+1);
    \draw[fill] (3,\x) circle (0.1);
    \draw[fill] (3,\x+1) circle (0.1);
    }
    \draw[thick] (5,3) -- (5,4);
    \draw[fill] (5,3) circle (0.1);
    \draw[fill] (5,4) circle (0.1);

    \foreach \x in {3,4,5,6}
    {
    \foreach \y in {6,8}
    {
    \draw[fill] (\y,\x) circle (0.1);
    }
    }
    \draw[thick] (6,5) -- (8,5);
    \draw[thick] (6,3) -- (6,6);
    \draw[thick] (8,3) -- (8,6);

	\end{tikzpicture}
\end{center}
\caption{The three twin-free trees for $n=8$ satisfying $\imax(T)=8$.}\label{fig:extrTn8}
\end{figure}

\section{Further thoughts}\label{sec:conc}

F\"{u}redi~\cite[Conjecture~4.3]{Fur87} asked about the maximum of $\imax$ for other graph classes.
This was done e.g.~for triangle-free graphs in~\cite{HT93,CJ97}, connected unicyclic graphs~\cite{KGD08} and graphs with bounded degrees~\cite{Liu94}. Let $\max \imax(\G)$ denote the maximum number of maximal independent sets in a graph class $\G.$
This maximum grows exponentially in all cases mentioned, and $\lim_{n \to \infty} (\max \imax(\G))^{1/n}$ equals $\sqrt{2}$ and $\sqrt[3]{3}$ for trees and graphs respectively. 

In this paper, we proved that the minimum number of $i_{max}$ for twin-free graphs, bipartite graphs and trees is logarithmic, linear and exponential, respectively.
Due to this big difference in behaviour of the minimum of $\imax$, it may even be considered more interesting to study $\min \imax(\G)$ for some other graph classes $\G$ (restricted to twin-free graphs), since even the behaviour in terms of the order may be unclear.

\begin{question}
    What is the behaviour of $\min \imax(\G)$ for twin-free graphs in a graph class $\G$?
\end{question}

One plausible interesting direction can be to wonder what happens for $k$-partite graphs as $n \to \infty.$
For bipartite graphs, the bound was linear, while the extremal construction in Theorem~\ref{thr:imaxG} satisfies $\chi(G) \sim \log_2(n).$
Similarly, one can wonder about graphs with bounded clique number, with  Conjecture~\ref{conj:K3free} as a particular case.

Let $\nu(G)$ be the size of a maximum induced matching in $G$. Since every independent set can be extended to a maximal independent set and a set containing one end of each edge in an induced matching is an independent set, we conclude that $\imax(G) \ge 2^{\nu(G)}.$
 Note that $\nu(G) \ge \frac{Cn}{\Delta^2}$ (see \cite{Joos16}) grows linearly in $n$ for graphs with fixed maximum degree independent of $n$.
For such sparse graph classes (bounded maximum degree independent of $n$), $\min \imax(G)$ is exponential in terms of $n.$ 
Kahn and Park~\cite{KP19} proved that for hypercubes $Q_n$ of order $N=2^n$, we have $\lim_{n \to \infty} \imax(Q_n)^{1/N} = 2^{1/4}$, while also $\nu(Q_n) = \frac{N}{4}$ for every $n \ge 2$: in other words, $\imax(Q_n)$ is of the form $2^{(1+o(1))\nu(G)}$.
Note here that the hypercube $Q_n$ is a twin-free graph if $n \ge 3$.
For the twin-free trees in Figure~\ref{fig:construction_mod5}, we note that $\lim_{n \to \infty} \imax(T)^{1/n} = 3^{1/5}$ while $\nu(G)=\frac{n}{5}+1$ (when $5 \mid n$), so here $2^{ \nu(G)/n}$ is a lower bound for the limit that is not sharp.
Studying $\lim_{n \to \infty} (\min \imax(\G))^{1/n}$ for twin-free graphs in a graph class $\G$ might be interesting, especially if there is a natural graph class for which the constant is larger than $3^{1/5}$.
One natural graph class to consider are $r$-regular graphs for fixed $r$.
It is known that $\imax(C_n)=P(n),$ where $P(n)$ denotes the $n^{th}$ Perrin number, which is defined by $P(0)=3,P(1)=0, P(2)=2$ and the relation $P(n)=P(n-2)+P(n-3)$ for $n \ge 3.$
For $n \ge 3$, the quantity $P(n)^{1/n}$ is minimized by $n=4$, and the second smallest value is attained for $n=6$.
Since $C_4$ is not twin-free, for the class $\G$ of twin-free $2$-regular graphs, $\lim_{n \to \infty} (\min \imax(\G))^{1/n}=\imax(C_6)^{1/6} = 5^{1/6}$.

On the other side of the spectrum, when $G$ is $r$-regular with $r=n-1-t$, i.e.~$\overline{G}$ has small degree, the question may be interesting as well.
Such graphs satisfy $\frac{n}{t+1}\le \imax \le 2^t n.$
For $t=n^{1/s}$ and $\overline{G}$ being the Cartesian product of $s$ $K_t$s, we have for example that $\imax(G) \sim s n^{1-1/s}.$


There may well be quite a number of other problems where a parameter $A$ can be unbounded in terms of another parameter $B$ if $A$ can increase when adding twins while $B$ does not.
This was for example also the obstruction in the related problem for $\nu(G)$, see~\cite{KPSX11}. 

One further direction is the relation between the vertex covering number $\tau(G)$ and $\imax(G).$
Hoang and Trung~\cite{HT19} proved that $\imax(G) \le 2^{\tau(G)}$ for every graph $G$. In the other direction there is no such bound, since the complete bipartite graph $K_{n,n}$ satisfies $\imax(G)=2$ and $\tau(G)=n.$
When restricting to twin-free graphs, we know that $\tau(G) \le n-1< 2^{\imax(G)}$ by Proposition~\ref{prop:imax<logn} and thus $\tau(G)$ is bounded by a function of $\imax(G)$. So the following question would tell us about the essential relationship between the two parameters.
\begin{question}
    What is the maximum possible vertex covering number $\tau$ of a twin-free graph with $\imax(G)=k$? 
\end{question}

Plausible candidates are $\tau(G)=O( \imax(G)^2 )$, and for given order $n$, $\tau(G)-\imax(G)\le \floorfrac{n}{2}-3.$
The first bound is attained by the complement of the Cartesian product $K_r \square K_r$ (one can connect every $K_r$ with an additional vertex), and the second one by the complement of a cycle of $K_3$s (i.e., a cycle $C_r$ for which every edge is connected to a new vertex).

\section*{Acknowledgement}

This project originated from the Mathematics Research Community workshop ``Trees in Many Contexts'', which was supported by the National Science Foundation under Grant Number DMS $1916439$. The authors would also like to thank Ferenc Bencs for suggesting a simplification in the proof of Theorem~\ref{thr:main}, and the referee for carefully reading the manuscript and suggesting several improvements and corrections.

\paragraph{Open access statement.} For the purpose of open access,
a CC BY public copyright license is applied
to any Author Accepted Manuscript (AAM)
arising from this submission.

\bibliographystyle{abbrv}
\bibliography{imax}

\begin{thebibliography}{10}

\bibitem{BD83}
B.~Bollob\'{a}s and P.~Duchet.
\newblock On {H}elly families of maximal size.
\newblock {\em J. Combin. Theory Ser. B}, 35(3):290--296, 1983.

\bibitem{CS22+}
S.~Cambie and N.~Salia.
\newblock Set systems without a simplex, {H}elly hypergraphs and
  union-efficient families, 2022.
\newblock arXiv:2210.16211.

\bibitem{CJ97}
G.~J. Chang and M.-J. Jou.
\newblock The number of maximal independent sets in connected triangle-free
  graphs.
\newblock {\em Discrete Math.}, 197/198:169--178, 1999.
\newblock 16th British Combinatorial Conference (London, 1997).

\bibitem{CHHL07}
I.~Charon, I.~Honkala, O.~Hudry, and A.~Lobstein.
\newblock Structural properties of twin-free graphs.
\newblock {\em Electron. J. Combin.}, 14(1):Research Paper 16, 15 pages, 2007.

\bibitem{Erdos71}
P.~Erd{\H{o}}s.
\newblock Topics in combinatorial analysis.
\newblock In {\em Proceedings of the Second Louisiana Conference on
  Combinatorics, Graph Theory and Computing}, pages 2--20, 1971.

\bibitem{Fur87}
Z.~F\"{u}redi.
\newblock The number of maximal independent sets in connected graphs.
\newblock {\em J. Graph Theory}, 11(4):463--470, 1987.

\bibitem{GGG88}
J.~R. Griggs, C.~M. Grinstead, and D.~R. Guichard.
\newblock The number of maximal independent sets in a connected graph.
\newblock {\em Discrete Math.}, 68(2-3):211--220, 1988.

\bibitem{HT19}
D.~T. Hoang and T.~N. Trung.
\newblock Coverings, matchings and the number of maximal independent sets of
  graphs.
\newblock {\em Australas. J. Combin.}, 73:424--431, 2019.

\bibitem{HT93}
M.~Hujter and Z.~Tuza.
\newblock The number of maximal independent sets in triangle-free graphs.
\newblock {\em SIAM J. Discrete Math.}, 6(2):284--288, 1993.

\bibitem{Joos16}
F.~Joos.
\newblock Induced matchings in graphs of bounded maximum degree.
\newblock {\em SIAM J. Discrete Math.}, 30(3):1876--1882, 2016.

\bibitem{KP19}
J.~{Kahn} and J.~{Park}.
\newblock The number of maximal independent sets in the {H}amming cube.
\newblock {\em Combinatorica}, 2022.

\bibitem{KPSX11}
I.~Kanj, M.~J. Pelsmajer, M.~Schaefer, and G.~Xia.
\newblock On the induced matching problem.
\newblock {\em J. Comput. System Sci.}, 77(6):1058--1070, 2011.

\bibitem{KGD08}
K.~M. Koh, C.~Y. Goh, and F.~M. Dong.
\newblock The maximum number of maximal independent sets in unicyclic connected
  graphs.
\newblock {\em Discrete Math.}, 308(17):3761--3769, 2008.

\bibitem{Liu94}
J.~Liu.
\newblock Constraints on the number of maximal independent sets in graphs.
\newblock {\em J. Graph Theory}, 18(2):195--204, 1994.

\bibitem{MillerMuller60}
R.~E. Miller and D.~E. Muller.
\newblock A problem of maximum consistent subsets.
\newblock Technical report, IBM Research Report RC-240, JT Watson Research
  Center, Yorktown Heights, NY, 1960.

\bibitem{MM65}
J.~W. Moon and L.~Moser.
\newblock On cliques in graphs.
\newblock {\em Israel J. Math.}, 3:23--28, 1965.

\bibitem{Mulder83}
H.~M. Mulder.
\newblock The number of edges in a {$k$}-{H}elly hypergraph.
\newblock In {\em Combinatorial mathematics ({M}arseille-{L}uminy, 1981)},
  volume~75 of {\em North-Holland Math. Stud.}, pages 497--501. North-Holland,
  Amsterdam, 1983.

\bibitem{Sagan88}
B.~E. Sagan.
\newblock A note on independent sets in trees.
\newblock {\em SIAM J. Discrete Math.}, 1(1):105--108, 1988.

\bibitem{TM20}
D.~S. Taletski\u{\i} and D.~S. Malyshev.
\newblock Trees without twin-leaves with the smallest number of maximal
  independent sets.
\newblock {\em Diskret. Mat.}, 30(4):115--134, 2018.

\bibitem{Wilf86}
H.~S. Wilf.
\newblock The number of maximal independent sets in a tree.
\newblock {\em SIAM J. Algebraic Discrete Methods}, 7(1):125--130, 1986.

\bibitem{W11}
D.~R. Wood.
\newblock On the number of maximal independent sets in a graph.
\newblock {\em Discrete Math. Theor. Comput. Sci.}, 13(3):17--19, 2011.

\end{thebibliography}

\end{document}